\newtheorem{theorem}{Theorem}[section]
\newtheorem{corollary}[theorem]{Corollary}
\newtheorem{lemma}[theorem]{Lemma}
\newtheorem{proposition}[theorem]{Proposition}
\theoremstyle{definition}
\newtheorem{remark}[theorem]{Remark}
\newcommand{\F}{\mathbb F}
\DeclareMathOperator{\Span}{span}
\newcommand{\comp}{{\rm comp}}
\author{Boris Adamczewski}
\address{Universit\'{e} de Lyon,
Universit\'e Claude Bernard Lyon 1, CNRS UMR 5208, Institut Camille Jordan, F-69622 Villeurbanne Cedex, France}
\email{boris.adamczewski@math.cnrs.fr}
\author{Reem Yassawi}
\address{
 Universit\'{e} de Lyon, Universit\'e Claude Bernard Lyon 1, CNRS UMR 5208, Institut Camille Jordan, F-69622 Villeurbanne Cedex, France}
\email{yassawi@math.univ-lyon1.fr}
\title{A note on Christol's theorem}
\date{}
\thanks{This project has received funding from the European Research Council (ERC) under the European Union's Horizon 2020 
research and innovation programme under the Grant Agreement No 648132. }
\begin{document}

\begin{abstract} 
Christol's theorem characterises algebraic power series over  finite fields in terms of finite automata.
In a recent article, Bridy develops a new proof of Christol's theorem by Speyer, to obtain a tight quantitative version, 
that is, to bound the size of the corresponding automaton in terms of the height and degree of the power series, 
as well as the genus of the  curve associated with the minimal polynomial of the power series.  
Speyer's proof, and Bridy's development, both take place in the setting of algebraic geometry, in particular by considering  
K\"ahler differentials of the function field of the curve. 
In this note we show how an elementary approach, based on diagonals of bivariate rational functions, provides essentially the same bounds.  
 \end{abstract}

\maketitle

\section{Introduction}

The title of this paper refers to the following classical result of Christol.  

\begin{theorem}[Christol]
Let $q$ be a power of a prime number $p$ and let 
$f(x)=\sum_{n=0}^{\infty} a_nx^n\in \mathbb F_q[[x]]$. Then $f(x)$ is algebraic over 
$\mathbb F_q(x)$ if and 
only if the sequence ${\bf a}=(a_n)_{n\geq 0}$ is $q$-automatic. 
\end{theorem}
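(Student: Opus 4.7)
The plan is to treat both directions via the $q$-decimation operators $\Lambda_r\colon\sum_n a_n x^n\mapsto\sum_n a_{qn+r}x^n$ ($0\le r<q$) on $\mathbb F_q[[x]]$, together with the identity
\[
g(x)=\sum_{r=0}^{q-1}x^r\,(\Lambda_r g)(x)^q\qquad(g\in\mathbb F_q[[x]]),
\]
which holds because $c^q=c$ for every $c\in\mathbb F_q$.

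\emph{Automatic $\Rightarrow$ algebraic.} If $\mathbf a$ is $q$-automatic, then the $\mathbb F_q$-span $V$ of its $q$-kernel is a finite-dimensional subspace of $\mathbb F_q[[x]]$ stable under every $\Lambda_r$. Choosing a basis $g_1=f,g_2,\dots,g_d$ of $V$, the identity above produces a matrix equation $\vec g=M(x)\vec g^{\circ q}$ with $M(x)\in\mathrm{Mat}_d(\mathbb F_q[x])$. Inverting $M$ over the fraction field, one shows that $L=\sum_i\mathbb F_q(x)g_i$ is stable under the Frobenius $h\mapsto h^q$; since $L$ has finite $\mathbb F_q(x)$-dimension, the sequence $f,f^q,f^{q^2},\ldots$ is eventually linearly dependent over $\mathbb F_q(x)$, and any such dependence witnesses algebraicity of $f$.

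\emph{Algebraic $\Rightarrow$ automatic.} I would first invoke Furstenberg's theorem — which has a short elementary proof in positive characteristic — to write $f(x)=\operatorname{diag}(P(x,y)/Q(x,y))$ for some $P,Q\in\mathbb F_q[x,y]$ with $Q(0,0)\ne 0$. The key computation is
\[
\frac{P(x,y)}{Q(x,y)}=\frac{P(x,y)Q(x,y)^{q-1}}{Q(x,y)^q}=\frac{\sum_{0\le r,s<q}x^r y^s N_{r,s}(x^q,y^q)}{Q(x^q,y^q)},
\]
using $Q(x,y)^q=Q(x^q,y^q)$ in $\mathbb F_q[x,y]$ and expanding $PQ^{q-1}$ modulo $(x^q,y^q)$. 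Extracting the coefficient of $x^{qm+r}y^{qm+r}$ identifies $\Lambda_r f$ with $\operatorname{diag}(N_{r,r}(x,y)/Q(x,y))$. Iterating, each element of the $q$-kernel of $\mathbf a$ is realised as the diagonal of a rational function with the \emph{same} denominator $Q$ and with numerator bi-degree obeying the contractive recursion $D\mapsto(D+(q-1)\deg Q)/q$; this forces $D$ to be eventually bounded by $\deg Q$, so only finitely many numerators appear and the $q$-kernel is finite.

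The principal obstacle is the algebraic $\Rightarrow$ automatic direction, specifically the bi-degree control in the iteration that confines all numerators $N_{r,s}$ to a fixed finite-dimensional space of polynomials. Once this contraction is in place, finiteness of the $q$-kernel follows at once, and the quantitative refinements of Speyer and Bridy amount to sharpening precisely this bi-degree bookkeeping.
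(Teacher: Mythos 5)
Your overall strategy matches exactly the approach the paper advocates for and quantifies: Christol's original route via Furstenberg's diagonal representation, together with tracking how the decimation/Cartier operators act on the numerators of a bivariate rational function with fixed denominator, and deducing finiteness of the $q$-kernel from the degree contraction $D\mapsto(D+(q-1)\deg Q)/q$ toward the fixed point $\deg Q$. Your automatic-$\Rightarrow$-algebraic direction is the standard Ore-relation argument, which the paper also recalls. So both directions are in the right spirit, and the hard direction is the same one the paper develops in Sections 3--4.

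Two points deserve more care. First, you invoke ``Furstenberg's theorem'' to obtain $f=\Delta(P/Q)$ with $Q(0,0)\neq 0$, but the elementary identity (Lemma~\ref{fursternberg-special}) only applies under the smoothness hypothesis $f(0)=0$ and $\frac{\partial P}{\partial y}(0,0)\neq 0$; removing it is not free, and the paper devotes the whole of Section~\ref{bounding-f-kernel} (the substitution $y\mapsto V_r(x)+x^r y$ controlled by the resultant) precisely to handle it. For a qualitative proof one can indeed reduce to the smooth case by subtracting a long enough initial polynomial segment of $f$ and then rescaling by a power of $x$, but you should spell this out rather than fold it into the statement of the theorem. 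Second, in the automatic-$\Rightarrow$-algebraic direction, the assertion that $L=\sum_i\mathbb F_q(x)g_i$ is Frobenius-stable does not follow directly from $\vec g=M(x)\vec g^{\circ q}$ (the matrix $M$ need not be invertible, and the $g_i^q$ need not lie in $L$). The standard fix is to pass to the ascending chain of $\mathbb F_q(x)$-spans of iterated Frobenius images of the kernel; this chain has bounded dimension, hence stabilises, and the stable limit is Frobenius-invariant and contains $f$, giving a linear dependence among $f,f^q,f^{q^2},\dots$
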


Here, an infinite sequence ${\bf a}=(a_n)_{n\geq 0}$ is {\em $q$-automatic} if $a_n$ is a finite-state function of the base-$q$ 
expansion of $n$. This means that there exists a deterministic finite automaton with output  taking the base-$q$ 
expansion of $n$ as input, and producing the symbol $a_n$ as output. For a formal definition, we refer the reader to \cite[Chapter 5]{Allouche-Shallit}. 
Christol's theorem is easy to prove, but nevertheless deep, in the sense that it provides an intimate connection 
between two apparently unrelated areas.   
On the one hand, algebraic power series with coefficients over finite fields are fundamental for 
arithmetic in positive characteristic while, on the other hand,  
finite automata are fundamental for computer science.  
On each side, there is a natural way to measure the complexity of the corresponding objects. 
The complexity of an algebraic power series $f$ is measured by its degree $d$ and its height $h$.
Here, the degree of $f$ is the degree of the field extension $[\mathbb F_q(x)(f(x)):\mathbb F_q(x)]$, while the height of $f$ 
is the minimal degree (in $x$) of a nonzero polynomial $P(x,y)\in\mathbb F_q[x,y]$ such that $P(x,f(x))=0$.  
With a more geometric flavor, one can also add the genus $g$ 
of the curve associated with the minimal polynomial of $f$. 
The complexity of a $q$-automatic sequence $\bf a$, denoted by  $\comp_q({\bf a})$, is 
measured  by the number of states in a
minimal  finite automaton generating $\bf a$ in {\em reverse} reading, 
by which we mean that the input $n$ is read starting from the least significant digit.  
In Section \ref{direct}, we also discuss bounds in {\em direct} reading.   
By a result of Eilenberg \cite{Eilenberg}, a sequence 
${\bf a}$ is $q$-automatic if and only if its \emph{$q$-kernel} 
$$
\ker_q({\bf a})=\left\{ (a_{q^rn+j})_{n\geq 0} : r\geq 0, 0\leq j<q^r\right\} 
$$
is a finite set. By \cite[Corollary 4.1.9 and Theorem 6.6.2]{Allouche-Shallit}, we have  
\begin{equation}\label{comp=ker}\comp_q({\bf a})= \vert \ker_q({\bf a})\vert.\end{equation} 
Thus we can bound the complexity of 
${\bf a}$ 
 by bounding  $|\ker_q({\bf a})|.$

We are interested here in the interplay between these two notions of complexity.   
If a sequence ${\bf a}$ is generated by a $q$-automaton with at most $m$ states, 
it is not difficult to show that the associated power series $f$ has degree at most 
$q^m-1$ and height at most $mq^{m+1}$ (see, for instance, \cite[Proposition 2.13]{Bridy}).  
Furthermore, these bounds cannot be significantly improved in general.  

Bounds in the other direction are more challenging. 
If $f$ is algebraic of degree $d$, the power series $f,f^q,f^{q^2},\ldots,f^{q^d}$ are linearly 
dependent over $\mathbb F_q(x)$ and thus there exist polynomials 
$A_0(x),\ldots,A_d(x)\in\mathbb F_q[x]$, not all zero, such that 
\begin{equation}\label{eq: ore}
A_0(x)f(x)+\cdots +A_d(x)f(x)^{q^d}=0 \, .
\end{equation}
Furthermore, it is possible to ensure that $A_0(x)\not=0$. 
Such a relation is called an \emph{Ore relation} for $f$.  
The standard proof of Christol's theorem, which dates back to \cite{CKMR}, is based on 
such a relation, and  is the proof given in  Allouche and Shallit's book \cite{Allouche-Shallit}.    
The arguments are effective and not particularly hard to quantify. Explicit bounds for 
$\comp_q({\bf a})$ can be easily extracted from \cite{H,HII,AB-1},    
where the authors work in a more general framework (arbitrary base fields of characteristic $p$ 
and power series in several variables).   
The authors of \cite{FKM} also obtain a quantitative version of Christol's theorem 
using similar techniques. The common feature of all these bounds is that they have a doubly exponential 
nature\footnote{Sharif and Woodcock \cite{SW} apparently realise that they can obtain effective bounds, 
though they do not spell them out, and here again, they would be doubly exponential.}, 
that is, they are of the form $q^{cq^{k}}$, where $c$ and $k$ are polynomial functions of $d$ and $h$.  
For, one can derive from \eqref{eq: ore} that  
$$
\comp_q({\bf a}) \leq q^{d(2H+1)} \, ,
$$
where $H:=\max\{\deg A_i(x) : 0\leq i \leq d\}$ denotes the \emph{height} of the Ore relation \eqref{eq: ore}. 
The double exponentiation appears because the upper bounds for $H$ are  already exponential in $q$. For instance, the bound  
$H\leq 2hq^d$ can be derived from \cite{HII}; see also \cite{AB-1}. 

In contrast, when $f$ is a rational function (i.e. $d=1$),  
one can easily obtain the bound $q^{h+1}$ and thus get rid of the double exponential. 
This suggests that the previous bounds are artificially large. 
In a recent paper, 
Bridy \cite{Bridy}  drastically improves on these doubly exponential bounds, confirming this guess.  
More precisely, he obtains the following essentially sharp bound: 
\begin{equation}\label{eq: bridy}
\comp_q({\bf a}) \leq (1+o(1))q^{h+d+g-1} \,,
\end{equation}
 where the $o(1)$ terms tends to $0$ for large values of any of $q,h,d$, or $g$. 
By Riemann's inequality, which gives $g\leq (h-1)(d-1)$, one can deduce that 
\begin{equation}\label{eq: bridy2}
\comp_q({\bf a}) \leq (1+o(1))q^{hd} \,.
\end{equation}
Bridy's approach is based on a new proof of Christol's theorem  in the context of algebraic geometry, due to 
Speyer \cite{Speyer-2010}.  Speyer's argument is elegant, connecting finite automata with the geometry of curves. 
Furthermore, Bridy's bound \eqref{eq: bridy} shows that this approach is also very efficient. 
However, the price to pay is that some classical background from algebraic geometry is needed: 
the Riemann-Roch theorem, existence and basic properties of the 
\emph{Cartier} operator acting on the space 
 of K\"ahler differentials of the function field associated with $f$, along with asymptotic bounds for the Landau function.  

Here, we come back to Christol's original argument \cite{Christol-1979}, 
which appears slightly before \cite{CKMR}. It is based on a result of Furstenberg \cite{Furstenberg} 
showing that any algebraic power series in $\mathbb F_q[[x]]$ can be expressed as the diagonal of  
rational power series in two variables. Using diagonals avoids the use of an Ore relation, which is the culprit behind the double 
exponentiation.  In the end, we obtain simply exponential bounds similar to those of Bridy, though slightly weaker.  
For instance, we can show that 
\begin{equation}\label{eq: our}
\comp_q({\bf a}) \leq (1+ o(1))q^{(h+1)d+1} \,,
\end{equation}
where the $o(1)$ terms tends to $0$ for large values of any of $q,d$, or $h$. 
Furstenberg's theorem has been studied and applied by a number of authors, for  example by Denef and Lipshitz \cite{DL},  the first author and Bell [AB13],
the second author and Rowland \cite{Rowland-Yassawi-2015} and Bostan, Caruso, Christol, and Dumas in \cite{Bostan-Caruso-Christol-Dumas}.

The purpose of this note  is to publicize  that  using diagonals to prove Christol's theorem gives us essentially the same bounds as the theoretically more demanding approach using the Riemann Roch theorem. Indeed, diagonals have already been used, by the first author and Bell, to obtain a singly exponential bound [AB13, Theorem 7.1]; here we simply push this technique to optimise the bounds.
Also, we point out that the methods that we describe here can also be  applied to the study of algebraic functions of several variables, Hadamard products, and reduction modulo prime powers of  diagonals of multivariate rational functions,  including transcendental ones, with little extra theoretical cost.

\section{Cartier operators and diagonals} 

We recall here some definitions and basic results about Cartier operators and diagonals. 

\subsection{One-dimensional Cartier operators}
Let $f(x)=\sum_{n\geq 0}a_nx^n\in \F_q[[x]]$.
 For every natural number $i$, $0\leq i \leq q-1$, we define $\Lambda_i$ as  
 the $\mathbb F_q$-linear operator acting on $\mathbb F_q[[x]]$ by: 
$$
 \Lambda_i \left(f(x)\right) =  \sum_{n=0}^{\infty} a_{nq+ i}x^n \,.
$$
We  let $\Omega_1$ denote the monoid generated by these operators under  composition. 
In the framework of Christol's theorem, the operators $\Lambda_i$ are usually called \emph{Cartier operators} 
(see, for instance, \cite{Allouche-Shallit}). 
They are the tools with which one drops down to the elements of $\ker_q({\bf a})$.
Indeed, we can rephrase Equation (\ref{comp=ker}) as  
\begin{equation}\label{eq: Eil}
  \comp_q({\bf a})=
\vert \Omega_1(f)\vert
\end{equation}
where $\Omega_1(f)$ is the orbit of $f(x)$ under the action of $\Omega_1$.
 All known proofs of the sufficiency direction of Christol's theorem have the same blueprint. One finds a finite 
set that contains $f(x)$, and which is invariant under the action of the Cartier operators. 
Usually, this set is an $\mathbb F_q$-vector space, say $V$, and one just has to prove that it is finite dimensional. 
However,  in order to obtain finer quantitative results, it will be convenient to  trace the orbit $\Omega_1(f)$ 
more closely inside $V$, and in doing so, we will lose the vector space structure.

Let us briefly recall why the $\Lambda_i$'s are referred to as the Cartier operators. 
To make the connection with the \emph{real} Cartier operator $\mathcal C$, 
which comes from algebraic geometry, we redefine 
our operators as follows. For every natural number 
$i$, $0\leq i \leq p-1$, we let $\Lambda'_i$ be  
 the $\mathbb F_q$-linear operator acting on $\mathbb F_q[[x]]$ by: 
$$
 \Lambda'_i \left(\sum_{n=0}^{\infty} a_nx^n\right) =  \sum_{n=0}^{\infty} a_{np+ i}^{1/p}x^n \,.
$$
If $q=p^r$, we retrieve the operator $\Lambda_i$  as a composition of $r$  operators $\Lambda'_j$. 
With this notation,   
$\Lambda'_{p-1}$ is reminiscent of $\mathcal C$. 
Let $f(x)\in\mathbb F_q[[x]]$ be  an algebraic power series and let $X$ denote the 
smooth projective algebraic curve, obtained after the normalization of the projective closure of the affine plane curve 
defined by the minimal polynomial of $f$.  Let
$\mathbb F_q(X)$ denote the function field associated with $X$, and let   $\Omega_{\mathbb F_q(X)/\mathbb F_q}$ denote the one-dimensional $\mathbb F_q(X)$-vector space
of K\"ahler 
differentials of $\mathbb F_q(X)$.  Choosing $x$ to be a \emph{separating} variable,  i.e. $x\not\in  \mathbb F_q(X)^{p}$, we can define   $\mathcal C$ as 
\begin{equation}\label{eq: truecartier}
\mathcal C(f(x)dx) = \Lambda_{p-1}'(f(x))dx \,,
\end{equation}
so that $\mathcal C$ acts on differentials exactly as $\Lambda_{p-1}$ acts on power series.  
 In Speyer's proof of Christol's theorem, a finite dimensional $\mathbb F_q$-vector space, containing 
 $f$ and invariant under 
 $\Omega_1$,  is obtained first by finding an effective divisor $D$ on $X$
 such that 
$$
\Omega(D) := \left\{\omega\in \Omega_{\mathbb F_q(X)/\mathbb F_q}\setminus \{0\} : (\omega)+D\geq 0\right\}\cup \{0\} \,
$$ 
is invariant under $\mathcal C$ and also the {\em twisted} Cartier operators, which play the role of the other operators $\Lambda_i$, and then by using \eqref{eq: truecartier}. 
The fact that $\Omega(D)$ has finite dimension over $\mathbb F_q$ is 
a direct consequence of the Riemann-Roch theorem. Bridy finds the best choice for the divisor $D$; he also has to trace 
the orbit of $f(x)dx$ more precisely  under $\mathcal C$ and the twisted Cartier operators  
inside $\Omega(D)$. 

\subsection{Two-dimensional Cartier operators}
The definition of the Cartier operators $\Lambda_i$ naturally extends to power series in an arbitrary number of 
variables. We recall here the two-dimensional case.  
Given $(i,j) \in  \{0, 1, \dots, q-1\}^2$, we let  $\Lambda_{i,j}$ denote 
the $\mathbb F_q$-linear operator acting on $\mathbb F_q[[x,y]]$ by: 
\begin{equation}
	 \Lambda_{i,j} \left(\sum_{n,m\geq 0} a_{n,m}x^ny^m\right) =  \sum_{n,m\geq 0} a_{nq+ i,mq+j}x^ny^m \,.
\end{equation}
Analogous to the notation in one dimension, we  let $\Omega_2$ denote the monoid generated by the two-dimensional Cartier operators under  composition. 
An elementary but fundamental property of these operators is that 
\begin{equation}\label{eq: gq}
\Lambda_{i,j}(fg^q)=\Lambda_{i,j}(f)g, 
\end{equation}
for all $f,g\in\F_q[[x,y]]$.
Notice that this useful equality also allows one to uniquely extend $\Lambda_{i,j}$ 
to the field of fractions of $\F_q[[x,y]]$ by setting 
$\Lambda(f/g):=\Lambda(fg^{q-1})/g$.  
Furthermore, if $P\in\mathbb F_q[x,y]$ then 
\begin{equation}\label{eq: pol0}
	 \deg_{x}\Lambda_{i,j} (P)\leq   \deg_{x}(P)/q\, \mbox{ and } \deg_{y}\Lambda_{i,j} (P)\leq   \deg_{y}(P)/q.
\end{equation}

\subsection{Diagonals}

The {\em diagonal} of a power series $f(x,y)=\sum_{n,m\geq 0}a_{n,m}x^ny^m\in \mathbb F_q[[x,y]]$ is defined by 
$$
\Delta(f)(x) = \sum_{n=0}^{\infty}a_{n,n}x^n \,.
$$
It is straightforward to check that   
\begin{equation}\label{eq: commute} 
\Lambda_{i}(\Delta (f)) = \Delta(\Lambda_{i,i}(f)) \,,
\end{equation}
for all $f(x,y)\in\mathbb F_q[[x,y]]$. 
Furstenberg \cite{Furstenberg} proved that any algebraic power series in $\mathbb F_q[[x]]$ is the 
diagonal of a bivariate rational power series. His proof is based on the 
following key formula  \cite[Proposition 2]{Furstenberg}. 

\begin{lemma}[Furstenberg]\label{fursternberg-special}
Let $\mathbb K$ be a field and let $P(x,y)\in \mathbb K[x,y]$. Let $f(x)\in \mathbb K[[x]]$ be a root of $P(x,y)$. If $f(0)=0$ and $\frac{\partial P}{\partial y}(0,f(0))\neq 0$,  then 
\begin{align}f(x)=\Delta \left (\frac{y \frac{\partial P}{\partial y}(xy,y)} {y^{-1}P(xy,y)}\right )\, \cdot\label{furstenberg-formula}\end{align}
\end{lemma}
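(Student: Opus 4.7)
The plan is to apply Hensel's lemma to factor $P$ over $\mathbb K[[x]]$, rewrite the rational expression appearing in \eqref{furstenberg-formula} as a sum of two power series in $\mathbb K[[x,y]]$, and compute each diagonal directly.

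Since $f(0) = 0$ and $\frac{\partial P}{\partial y}(0, 0) \neq 0$, Hensel's lemma applied to $P$ viewed as a polynomial in $y$ with coefficients in the local ring $\mathbb K[[x]]$ produces a factorization
\[
P(x,y) = (y - f(x))\, Q(x,y),
\]
with $Q \in \mathbb K[[x]][y]$ and $Q(0,0) = \frac{\partial P}{\partial y}(0,0) \neq 0$. Writing $f(x) = x g(x)$ with $g \in \mathbb K[[x]]$, the identity $y - f(xy) = y(1 - x g(xy))$ gives
\[
y^{-1} P(xy, y) = (1 - x g(xy))\, Q(xy, y),
\]
a unit of $\mathbb K[[x, y]]$ (constant term $Q(0,0) \neq 0$). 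Differentiating the factorization and then substituting $(x, y) \mapsto (xy, y)$,
\[
\frac{\partial P}{\partial y}(xy, y) = Q(xy, y) + y(1 - x g(xy))\, \frac{\partial Q}{\partial y}(xy, y),
\]
so dividing through and cancelling $(1 - x g(xy))$ in the second summand produces the clean decomposition
\[
\frac{y\, \frac{\partial P}{\partial y}(xy, y)}{y^{-1} P(xy, y)} = \frac{y}{1 - x g(xy)} + \frac{y^2\, \frac{\partial Q}{\partial y}(xy, y)}{Q(xy, y)}.
\]

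For the first summand, geometric expansion gives $y/(1 - x g(xy)) = \sum_{k \geq 0} x^k y\, g(xy)^k$. Since $g(xy)^k$ lies in $\mathbb K[[xy]]$, only the $k=1$ term contributes monomials of equal $x$- and $y$-degree, and its contribution to the diagonal is exactly $x g(x) = f(x)$. For the second summand, $\frac{\partial Q}{\partial y}(xy, y)/Q(xy, y)$ equals $H(xy, y)$ for some $H \in \mathbb K[[u, v]]$ since $Q(0,0) \neq 0$; hence every monomial in $y^2 H(xy, y)$ has the form $x^a y^{a + j + 2}$ with $j \geq 0$, whose $y$-degree strictly exceeds its $x$-degree, so no diagonal term appears. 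Summing the two contributions yields $f(x) + 0 = f(x)$, the desired identity.

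The principal obstacle is bookkeeping rather than ideas: one must check that $Q(xy, y)$ and its inverse remain in $\mathbb K[[x,y]]$ after the substitution $x \mapsto xy$, which relies on both hypotheses $f(0) = 0$ and $\frac{\partial P}{\partial y}(0, f(0)) \neq 0$. Once this is secured, the key identity $y - f(xy) = y(1 - x g(xy))$ does essentially all the algebraic work.
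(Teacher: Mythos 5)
Your proof is correct. The paper does not actually prove Lemma~\ref{fursternberg-special}: it cites Furstenberg and offers only a heuristic via Cauchy's residue theorem, an analytic sketch over $\mathbb C$ given as intuition rather than as a proof over a general field. Your argument supplies the ``easy and elementary'' algebraic proof the paper alludes to but omits. A small remark: Hensel's lemma is not strictly needed, since $f$ is given by hypothesis and the factorization $P(x,y)=(y-f(x))\,Q(x,y)$ in $\mathbb K[[x]][y]$ is ordinary division by a linear factor; differentiating this factorization and evaluating at the origin then gives $Q(0,0)=\frac{\partial P}{\partial y}(0,0)\neq 0$. After substituting $(x,y)\mapsto(xy,y)$, the split into the piece $y/(1-xg(xy))$, whose diagonal is exactly $xg(x)=f(x)$, and the piece $y^{2}\,\frac{\partial Q}{\partial y}(xy,y)/Q(xy,y)$, all of whose monomials $x^{a}y^{b}$ satisfy $b>a$ and hence contribute nothing to the diagonal, is clean and correct. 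You also correctly identify where each hypothesis enters, and your argument works over an arbitrary field $\mathbb K$, which the residue sketch does not.
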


The proof of this lemma is both easy and elementary,
though it took Furstenberg's insight to find the rational function in the right hand side of  \eqref{furstenberg-formula}.  
Though the formula is valid over any field, 
it has its roots firmly planted in residue theory, where we can express an algebraic function as an integral of a certain rational function. Precisely, suppose that $P(0,0)=0$ and that $0$ is an isolated root of $P(0,y)=0$. These conditions guarantee 
that there  is a unique power series $y=f(x)$ that converges close to the origin, and  satisfying both $P(x,f(x))=0$ and $f(0)=0$. By Cauchy's  generalised residue theorem,  one can express
\begin{equation}\label{eq: res1}
 f(x) = \frac{1}{2\pi i}\int_{\gamma} \frac{y\frac{\partial P}{\partial y}(x,y)}{P(x,y)}dy 
 \end{equation}
for $\gamma$ and $|z|$ sufficiently small. 
On the other hand, if $g(x,y)\in\mathbb C[[x,y]]$, then $\Delta(g)$ is also a residue that can be simply expressed as 
\begin{equation}\label{eq: res2}
\Delta(g)= \frac{1}{2\pi i} \int_\gamma g(x/w,w) )\frac{dw}{w} \,\cdot
 \end{equation}
Now \eqref{eq: res1} and \eqref{eq: res2} allow us to deduce  \eqref{furstenberg-formula}.

\section{The smooth case}

In this section, we first describe our strategy in the smooth case and show how it easily leads 
to simply exponential bounds. By the smooth case, we mean that 
 $f(x)=\sum_{n\geq 0} a_n x^n  \in x\F_q[[x]]$ is an algebraic power series whose minimal polynomial $P(x,y)$ 
satisfies Furstenberg's condition: $\frac{\partial P}{\partial y}(0,0)\neq 0$. This conditions ensures  
that the plane algebraic curve associated with $P$ is nonsingular at the origin.

 By Furstenberg's formula,  
there exists an explicit rational power series $P/Q\in\mathbb F_q(x,y)$ such that 
$f=\Delta(P/Q)$. Let $m$ be an upper bound for the degree in $x$ and $y$ of the polynomials $P$ and $Q$. 
Then 
$$
W := \left\{A/Q : \max(\deg_x(A),\deg_y(A))\leq m \right\} 
$$
is a $\mathbb F_q$-vector space of dimension $(m+1)^2$. 
Furthermore, $W$ is invariant under $\Omega_2$. Indeed, $\Lambda_{i,j}(A/Q)=\Lambda_{i,j}(AQ^{q-1})/Q$ 
and it follows from \eqref{eq: pol0} that 
$$
\deg_{x}(\Lambda_{i,j}(AQ^{q-1}))\leq m
\mbox{ and }\deg_{y}(\Lambda_{i,j}(AQ^{q-1}))\leq m
\,.$$ 
Thus we  deduce from \eqref{eq: commute} that $\Omega_1(f)\subset \Delta(W)$. 
Since $\Delta$ is linear, $\Delta(W)$ is an $\mathbb F_q$-vector space of dimension at most $(m+1)^2$. 
Furthermore, if $f$ has degree $d$ and height $h$, Furstenberg's formula shows that  
we can choose $m=h+d$.  Then we get from Equality \eqref{eq: Eil} that  
$$
\vert \Omega_1(f)\vert\leq q^{(h+d+1)^2} \,,
$$
which is already an acceptable simply exponential bound.

In the next theorem, we refine the previous argument to obtain  
a bound that is only very slightly weaker than the one given by Bridy in \eqref{eq: bridy2}.

\begin{theorem}\label{smooth-case}
Let $f(x)=\sum_{n\geq 0} a_n x^n \in x\F_q[[x]]$ be an algebraic power series of degree $d$ and height $h$, and 
let $P(x,y)\in \F_q[x,y]$ denote the minimal polynomial of $f$. If $\frac{\partial P}{\partial y}(0,0)\neq 0$,
then   $$ \comp_q({\bf a})
\leq 1+ q^{(h+1)d}.$$ 
\end{theorem}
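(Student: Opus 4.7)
The plan is to sharpen the warmup argument by exploiting the fact that the polynomials $N := y \frac{\partial P}{\partial y}(xy,y)$ and $Q := y^{-1} P(xy,y)$ produced by Furstenberg's lemma are much more constrained as to which monomials can appear than the crude bounds $\deg_x \leq h$, $\deg_y \leq h+d$ suggest. Writing $P(x,y) = \sum_{0 \leq i \leq h,\,0 \leq j \leq d} c_{ji}\, x^i y^j$ and substituting $x \leftarrow xy$, I would compute
\[
Q = \sum_{i,j} c_{ji}\, x^i y^{i+j-1}, \qquad N = \sum_{i,\,j \geq 1} j\, c_{ji}\, x^i y^{i+j},
\]
so that every monomial $x^a y^b$ of $Q$ satisfies $b-a = j-1 \in \{-1,0,\ldots,d-1\}$, while every monomial of $N$ satisfies $b-a = j \in \{1,\ldots,d\}$. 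The key new idea is then to track not $\deg_y$ itself, but the smaller shifted quantity $b - a$, whose range is controlled by $d$ alone.

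With this observation in hand, I would let $V \subseteq \F_q[x,y]$ be the $\F_q$-span of the monomials $x^a y^b$ with $0 \leq a \leq h$ and $0 \leq b-a \leq d-1$, and set $W := \{A/Q : A \in V\}$. Since for each of the $h+1$ admissible values of $a$ there are exactly $d$ admissible values of $b$, one has $\dim_{\F_q} V = (h+1)d$ and $|W| = q^{(h+1)d}$. The desired bound will follow once I verify two things: first, that each diagonal Cartier operator $\Lambda_{i,i}$, $0 \leq i \leq q-1$, preserves $W$; second, that $\Lambda_{i,i}(N/Q) \in W$ for every such $i$.

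Both facts reduce, via \eqref{eq: gq}, to elementary bookkeeping on the monomials of $AQ^{q-1}$ and $NQ^{q-1}$. For $A \in V$, a monomial $x^u y^v$ of $AQ^{q-1}$ satisfies $u \leq qh$, while $v-u$ lies in the Minkowski sum $[0,d-1] + [-(q-1),(q-1)(d-1)] = [1-q,\,qd-q]$; after applying $\Lambda_{i,i}$, the exponents are divided by $q$, so the new difference is an integer in $[(1-q)/q,\,d-1]\cap\Z = [0,d-1]$. The same computation with $N$ in place of $A$ gives $v-u \in [2-q,\,qd-q+1]$ for $NQ^{q-1}$, and again $(v-u)/q$ lands in $[0,d-1]$ as an integer. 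Iterating these two facts and translating back via \eqref{eq: commute}, every element of $\Omega_1(f)$ other than $f$ itself lies in $\Delta(W)$, which yields $\comp_q({\bf a}) \leq 1 + q^{(h+1)d}$. I expect the only step that calls for real insight to be identifying $b-a$, rather than $\deg_y$, as the correct invariant to track; once that is on the table, both the invariance of $W$ and the dimension count are one-line inequalities.
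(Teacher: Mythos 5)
Your proof is correct and follows essentially the same route as the paper's: the substitution $a=i$, $b=i+j$ shows that tracking $b-a$ for ordinary monomials $x^a y^b$ is exactly the paper's choice of working in the monomial basis $(xy)^i y^j$, so your spaces $V$ and $W$ coincide with the paper's $V$ and $W$ in \eqref{nested-spaces}. The Minkowski-sum bookkeeping you use to show $\Lambda_{\ell,\ell}$ preserves $W$ and sends $N/Q$ into $W$ is the same computation the paper performs by observing that the $\Lambda_{\ell,\ell}$-image of the big monomial set equals that of $U$ and lands in $V$.
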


\begin{proof}
Let $P(x,y)= \sum_{i=0}^{d}A_i(x)y^i$. By assumption, we have $A_0(0)=0$ and $A_1(0))\not=0$. This implies that $y$ divides  $P(xy,y)$.  By Lemma
\ref{fursternberg-special},  we have 
\begin{equation}\label{furst} f(x)=
\Delta \left( \frac{y\frac{ \partial P }{\partial y}(xy,y)}
      {y^{-1}P(xy,y)}\right)
      .\end{equation}
Consider the three $\F_q$-vector spaces
\begin{eqnarray}\label{nested-spaces}
 U &:=&\Span_{\mathbb F_q}\left \{ (xy)^{i}y^{j}: 0\leq i \leq qh, 0\leq j \leq qd-1 \right\}, \\
 \nonumber     V &:=&\Span_{\mathbb F_q}\left \{(xy)^{i}y^{j}: 0\leq i \leq h, 0\leq j \leq d-1 \right\}, \\
\nonumber  W &:=&\Span_{\mathbb F_q} \left \{ \frac{  (xy)^{i}y^{j} }{y^{-1}P(xy,y)}: 0\leq i \leq h, 0\leq j \leq d-1  \right \}\, .
\end{eqnarray}
Let $\ell \in \{ 0,\ldots , q-1\}$.  For any $u\in U$,  note that $\Lambda_{\ell,\ell}(u) \in V$.
We claim that $\Lambda_{\ell}(f) \in \Delta(W)$. 
 For, using Properties \eqref{eq: commute} and \eqref{eq: gq}, we  have
 
\begin{eqnarray*}
  \Lambda_{\ell}(f) 
&=&    \Lambda_{\ell} \Delta  \left (
\frac{y    \frac{ \partial P }{\partial y}(xy,y) \left(y^{-1}P(xy,y)\right)^{q-1}   }{\left( y^{-1}P(xy,y)\right)^q}
\right ) \\
&=& 
 \frac{ \Delta
 \left( \Lambda_{\ell,\ell} \left (y        \frac{ \partial P }{\partial y}(xy,y)        \left( y^{-1}P(xy,y)\right)^{q-1}  \right) \right)
}{y^{-1}P(xy,y)}\, \cdot
 \end{eqnarray*}
Now notice that the polynomial 
$y  \frac{ \partial P }{\partial y}(xy,y) \left( y^{-1}P(xy,y)\right)^{q-1}$ is an $\F_q$-linear combination of 
monomials in  the set $\{(xy)^{i}y^{j}: 0\leq i \leq qh, 2-q\leq j \leq (d-1)q+1 \}$.  Notice also that if $j\not\equiv 0 \mod q$, 
then  $\Lambda_{\ell, \ell} (   (xy)^{i}y^{j}         )=0$ for any $\ell$. This implies that 
the image of this last set under $\Lambda_{\ell,\ell}$ is the same as that of $U$, so
 \begin{eqnarray*}     \Lambda_{\ell,\ell}    \left (y        \frac{ \partial P }{\partial y}(xy,y)        \left( y^{-1}P(xy,y)\right)^{q-1}  \right)  & \in &    
   \Lambda_{\ell, \ell} \left(  U  \right)    \subset V,
    \end{eqnarray*}
    and this proves our claim that $\Lambda_{\ell}(f) \in \Delta(W)$. 
  The same reasoning  shows that 
  $\Delta (W)$ is invariant under $\Omega_1$.  Since $\Lambda_\ell(f)\in \Delta(W)$ for all $\ell\in\{0,\ldots,q-1\}$, 
  it follows that $\Omega_1(f) \setminus \{ f\}$ is a subet of  
\begin{equation*}
 \Delta(W) =    \Span_{\mathbb F_q}  \left \{ \Delta\left( \frac{  (xy)^{i}y^j }{y^{-1}P(xy,y)}\right) : 0\leq i \leq h, 0\leq i \leq d-1 \right\}\,,
\end{equation*} 
     which has dimension at most $(h+1)d$ over $\mathbb F_q$. 
 The result follows.
 \end{proof}

\section{The general case} 

The aim of this section is to remove the assumption needed in Furstenberg's formula, 
proving the following general bound. 

 \begin{theorem}\label{bounding-f-kernel}
 Let $f(x)=\sum_{n\geq 0} a_n x^n \in \F_q[[x]]$ be an algebraic power series of degree $d$ and height $h$.  
 Then 
 $$
 \comp_q({\bf a})\leq (1+ o(1))q^{(h+1)d+1}\,,
 $$
 where the $o(1)$ term tends to 0 for 
 large values of any of $q$, $h$, or $d$. 
 \end{theorem}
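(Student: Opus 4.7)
The plan is to reduce the general case of Theorem~\ref{bounding-f-kernel} to the smooth case of Theorem~\ref{smooth-case} by handling the failure of the hypothesis $\frac{\partial P}{\partial y}(0, f(0))\neq 0$ required in Lemma~\ref{fursternberg-special}.

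First, I would normalize $f$ so that $f(0) = 0$. Writing $f = a_0 + g$ with $a_0 = f(0)\in\F_q$, the series $g\in x\F_q[[x]]$ has minimal polynomial $\tilde P(x,y) = P(x, y+a_0)$ of the same degree $d$ and same height $h$ as $P$. Since $\Lambda_\ell(f) = \Lambda_\ell(g) + a_0\delta_{\ell,0}$ for every $\ell$, an easy induction shows that every iterated image of $f$ under $\Omega_1$ differs from the corresponding image of $g$ by a constant in $\F_q$, which yields $|\Omega_1(f)|\leq 1 + 2|\Omega_1(g)|$. It therefore suffices to bound $|\Omega_1(g)|$, the small constant correction being absorbed by the $(1+o(1))$ factor.

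If additionally $\frac{\partial \tilde P}{\partial y}(0,0)\neq 0$, then Theorem~\ref{smooth-case} applies directly to $g$ and gives $|\Omega_1(g)|\leq 1 + q^{(h+1)d}$. The substance of the theorem is therefore the singular subcase $\frac{\partial \tilde P}{\partial y}(0,0) = 0$, in which Furstenberg's formula \eqref{furstenberg-formula} cannot be used as stated: the denominator $y^{-1}\tilde P(xy,y)$ vanishes at $(0,0)$ and so admits no power series inverse. My plan is to adapt the construction of Theorem~\ref{smooth-case} by enlarging its invariant vector space by exactly one dimension, aiming for an $\Omega_1$-invariant subspace of the form
\[
V \;:=\; \F_q\cdot g \;+\; \Delta(W),
\]
where $W$ is the space appearing in the proof of Theorem~\ref{smooth-case} but now interpreted formally via the extension of the Cartier operators $\Lambda_{i,j}$ to the field of fractions of $\F_q[[x,y]]$ recalled in Section~2. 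Since $\dim V \leq (h+1)d + 1$, this would give $|\Omega_1(g)|\leq q^{(h+1)d+1}$, matching the target bound up to the $(1+o(1))$ correction.

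The main obstacle is verifying the $\Omega_1$-invariance of $V$ in the singular subcase. Because $1/(y^{-1}\tilde P(xy,y))$ is not a power series, the identification of $\Delta(W)$ as a genuine subspace of $\F_q[[x]]$ is no longer automatic, and the singular contributions of the denominator must be shown to be absorbed by the extra summand $\F_q\cdot g$. I would approach this either (a) algebraically, by working with the extended Cartier operators on the field of fractions of $\F_q[[x,y]]$ and verifying directly that each $\Lambda_\ell$ sends $g$ into $\Delta(W) + \F_q\cdot g$ while leaving $\Delta(W)$ itself invariant; or (b) by first replacing $\tilde P$ with a polynomial $Q(x,y)\in\F_q[x,y]$ of mildly larger $x$-height that still admits $g$ as a root but does satisfy the smoothness hypothesis, obtained for instance by a Weierstrass-type factorization of $\tilde P$ in $\F_q[[x]][y]$ followed by clearance of denominators, and then applying Theorem~\ref{smooth-case} to $Q$. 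Either route accounts for the single extra factor of $q$ in the bound $q^{(h+1)d+1}$ relative to the smooth bound $q^{(h+1)d}$.
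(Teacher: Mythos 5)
Your proposal correctly locates the difficulty --- when $\frac{\partial P}{\partial y}(0,f(0))=0$, Furstenberg's denominator $y^{-1}\tilde P(xy,y)$ is not a unit in $\F_q[[x,y]]$, so the smooth-case construction breaks down --- but neither of your two proposed fixes actually works, and you have not supplied the machinery that the argument genuinely needs.

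Plan (a), enlarging $\Delta(W)$ by the single line $\F_q\cdot g$, cannot succeed in general. When the singularity at the origin is not just a node, peeling off the constant $a_0$ does not remove it; indeed $\frac{\partial\tilde P}{\partial y}(0,0)=\frac{\partial P}{\partial y}(0,f(0))$ is still zero after your shift, so $\Delta(W)$ is not even a well-defined subspace of $\F_q[[x]]$, and there is no reason to expect a one-dimensional correction to repair this. Plan (b) is closer to the correct strategy, but the phrase ``mildly larger $x$-height'' hides the real problem. The polynomial you need to pass to is essentially $Q_r(x,y)=P\bigl(x,V_r(x)+x^r y\bigr)$ with $V_r$ the degree-$r$ Taylor prefix of $f$, where $r$ is the order of vanishing at $0$ of the resultant of $P$ and $\partial P/\partial y$ --- and $r$ can be as large as $h(2d-1)$. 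The $x$-degree of $Q_r$ is then of order $h+dr$, which is \emph{not} mild: feeding this into the smooth-case bound $q^{(H+1)d}$, with $H$ this inflated height, produces a bound that is much worse than $q^{(h+1)d+1}$. Recovering the sharp exponent requires showing that iterated Cartier operators progressively \emph{shrink} the relevant vector space back down to dimension $(h+1)d$ once one has composed about $\lfloor\log_q r\rfloor$ of them; that shrinkage argument (together with a case analysis on whether the word $\ell_t\cdots\ell_0$ read as a base-$q$ integer is greater than, equal to, or less than $r$, and an intertwining lemma relating $\Lambda_\ell(x^j f)$ to $\Lambda_{\ell'}(f)$) is the technical heart of the proof and is entirely absent from your proposal. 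The finitely many ``short'' compositions that have not yet entered the shrunk space are what produce the $o(1)$ correction in the final bound --- also not addressed in your sketch. As written, the proposal identifies the right obstacle and the right general direction, but does not contain a proof.
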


\noindent{\bf Notation.} Throughout this section, we let
$f(x) = \sum_{n\geq0} a_n x^n\in \mathbb F_q[[x]]$ denote an algebraic power series 
of degree $d$ and height $h$, and we let
$P(x,y)$ denote its minimal polynomial. 
 Also we fix $r$ to be  the order at $0$ of the resultant of $P(x,y)$ 
and $ \frac{\partial P}{\partial y}(x,y)$. By the 
 determinantal formula for the resultant, we have  
\begin{equation} \label{determinant} r\leq h(2d-1). \end{equation} 

 We define
 \begin{eqnarray} \label{f-tail-expression}
 \nonumber V_r(x)&:=&\sum_{n=0}^r a_nx^n \, , \\
  f^{(r)}(x)&:=& x^{-r}( f(x)-V_r(x))\in x\mathbb F_q[[x]] \, , \\
  \nonumber M_r(x,y)&:=& V_r(x) + x^ry  \, ,\mbox{ and } \\
\nonumber  Q_r(x,y)&:=&P(x,M_r(x,y)) \in \mathbb F_q[x;y].
\end{eqnarray}
Notice that
\begin{equation}\label{eq: Qr}
f^{(r)}(0)=0 \,\,\mbox{  and }\,\, Q_r(x,f^{(r)}(x))=0 \,.
\end{equation}    

The following elementary argument shows that $f^{(r)}$ is the only power series root of $Q_r$ with no constant term.  
 The proof follows the argument given in \cite[proof of Lemma 6.2]{Adam-Bell-2013}; we include it for the sake of 
completeness.

 \begin{lemma}\label{separation-lemma}  
 
There exists a nonnegative integer $s$ such that $x^{-s}Q_r(x,y)$ is a polynomial satisfying 
 the condition of Furstenberg's formula, that is 
$\frac{\partial (x^{-s}Q_r)}{\partial y}(0,0)\neq 0$, so that
$$
f^{(r)}(x) = \Delta\left(\frac{y \frac{\partial Q_r}{\partial y}(xy,y)} {y^{-1}Q_r(xy,y)}\right ) \,\cdot
$$
 \end{lemma}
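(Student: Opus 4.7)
The plan is to expand $Q_r(x,y)$ as a polynomial in $y$, estimate the $x$-adic order of each coefficient, and take $s$ to be the minimum of these orders. Writing $P(x,y) = \sum_{i=0}^d A_i(x) y^i$ and applying the formal Taylor expansion of $P$ in its second variable about $V_r(x)$ yields
\begin{equation*}
Q_r(x,y) \;=\; P\bigl(x,\, V_r(x) + x^r y\bigr) \;=\; \sum_{j=0}^d B_j(x)\, y^j, \qquad B_j(x) = x^{rj}\, P_j\bigl(x, V_r(x)\bigr),
\end{equation*}
where $P_j(x,z) := \sum_{i \geq j}\binom{i}{j} A_i(x) z^{i-j}$ is the $j$-th formal divided derivative of $P$ in its second variable, so $P_0 = P$ and $P_1 = \partial P/\partial y$. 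I will take $s := \min_j v_0(B_j)$, with $v_0$ the $x$-adic order, and show that this minimum is finite, attained by $B_1$, and strictly less than $v_0(B_0)$.

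Set $s_1 := v_0\!\bigl(\tfrac{\partial P}{\partial y}(x, f(x))\bigr)$. Since $V_r - f = O(x^{r+1})$, Taylor expanding $P$ and $\tfrac{\partial P}{\partial y}$ around $y = f(x)$ and using $P(x, f(x)) = 0$ yields
\begin{equation*}
v_0(B_0) \;\geq\; (r+1) + s_1, \qquad v_0(B_1) \;=\; r + s_1, \qquad v_0(B_j) \;\geq\; rj \quad (j \geq 2),
\end{equation*}
the middle equality using $s_1 \leq r$ so that the perturbation does not shift the order. Granting this, $s = r + s_1$, and the three conditions for Furstenberg's formula applied to $x^{-s}Q_r$ fall out: $x^{-s}Q_r \in \F_q[x,y]$ because every $v_0(B_j) \geq s$; $(x^{-s}Q_r)(0,0) = 0$ because $v_0(B_0) > s$; and $\tfrac{\partial (x^{-s}Q_r)}{\partial y}(0,0) \neq 0$ because $v_0(B_1) = s$ exactly.

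The main obstacle is thus the inequality $s_1 \leq r$. My approach is to use a B\'ezout identity for the resultant: since $\operatorname{Res}_y(P, \partial P/\partial y)(x) \in \F_q[x]$ lies in the ideal generated by $P$ and $\partial P/\partial y$ in $\F_q[x,y]$, there exist polynomials $u, v \in \F_q[x,y]$ with
\begin{equation*}
u(x,y)\, P(x,y) \;+\; v(x,y)\, \tfrac{\partial P}{\partial y}(x,y) \;=\; \operatorname{Res}_y\!\bigl(P, \tfrac{\partial P}{\partial y}\bigr)(x).
\end{equation*}
Substituting $y = f(x)$ kills the first term, leaving $v(x, f(x)) \cdot \tfrac{\partial P}{\partial y}(x, f(x)) = \operatorname{Res}_y(P, \partial P/\partial y)(x)$. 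Since $v(x, f(x)) \in \F_q[[x]]$, its $x$-adic order is nonnegative; taking $v_0$ of both sides then gives $s_1 \leq r$.

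Finally, by \eqref{eq: Qr} the series $f^{(r)}$ is a root of $Q_r$, hence of $x^{-s}Q_r$, and $f^{(r)}(0) = 0$. Applying Lemma~\ref{fursternberg-special} to $x^{-s}Q_r$ with this root produces a Furstenberg diagonal representation in which the common factor $(xy)^{-s}$ cancels between numerator and denominator, leaving precisely the displayed formula.
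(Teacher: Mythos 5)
Your proof is correct and follows essentially the same approach as the paper: expand $Q_r$ in powers of $y$ via a (Hasse) Taylor expansion of $P$ about $V_r(x)$, set $s$ to be the minimal $x$-adic order of the coefficients, and use a B\'ezout/resultant identity to show that this minimum is attained by the $y$-linear coefficient, strictly below the order of the constant term. The only difference is cosmetic: you substitute the exact root $y=f(x)$ into the B\'ezout identity so the $P$-term vanishes identically, whereas the paper substitutes the truncation $y=V_r(x)$ and then argues that $A(x,V_r(x))P(x,V_r(x))$ has strictly larger $x$-order than the resultant $x^rT(x)$.
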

 
 \begin{proof}
 Let $S(x)$ denote the resultant of $P(x,y)=\sum_{i=0}^dA_i(x)y^{i}$ and $ \frac{\partial P}{\partial y}(x,y)$, 
 so that $S(x)=x^rT(x)$ with $T(0)\not=0$. 
 To simplify notation, let  $V(x):=V_r(x)$, $g(x):= f^{(r)}(x)$, and $Q:=Q_r$.    
Setting
$$
B_i(x) := \frac{1}{i!} \frac{\partial^{i} P}{\partial y^{i}}(x,V(x)) \,,
$$
we get that
$$
Q(x,y)= \sum_{i=0}^dB_i(x)x^{ri}y^{i} \,. 
$$
Note that 
\begin{eqnarray*}
P(x,V(x))&=&P(x,V(x))-P(x,f(x))\\
&=&(V(x)-f(x))C(x)\\
&=& -x^rg(x)C(x) \,,
\end{eqnarray*}
with $C(x)=\sum_{i=1}^dA_i(x)\left(\sum_{k=0}^{i-1}V(x)^kf(x)^{i-k-1}\right)\in\mathbb F_q[[x]]$. Thus 
$x^{r+1}$ divides $P(x,V(x))$, as $g(0)=0$. 
On the other hand, since $S$ is the resultant of $P$ and $ \frac{\partial P}{\partial y}$, 
there exist two polynomials $A(x,y)$ and $B(x,y)$ such that 
$$
x^rT(x)=A(x,y)P(x,y) + B(x,y)\frac{\partial P}{\partial y}(x,y) \,.
$$
It follows that 
$$
B(x,V(x))\frac{\partial P}{\partial y}(x,V(x)) = x^rT(x) -A(x,V(x))P(x,V(x))\,,
$$
which implies that $\nu\leq r$, where  $\nu$ denotes the order at $0$ of $B_1(x)=\frac{\partial P}{\partial y}(x,V(x))$. 
Since the order of $B_i(x)x^{ri} \geq 2r$ for all $i$ such that $2\leq i\leq d$, 
we obtain that the order of $B_0(x)$ at $0$ is at least equal to $s:=\nu+r$. It follows that 
$x^{-s}Q(x,y)$ is a polynomial such that 
$$
\frac{\partial (x^{-s}Q)}{\partial y}(x,0) = B_1(x)x^{-\nu} 
$$
and thus $\frac{\partial (x^{-s}Q)}{\partial y}(0,0)\not=0$, as desired. 
By \eqref{eq: Qr}, we can apply Furstenberg's Lemma to $f^{(r)}$ and $x^{-s}Q(x,y)$, which 
provides the expected formula. 
 \end{proof}

The idea behind proving Theorem \ref{bounding-f-kernel}  is to find successively shrinking 
vector spaces to which most of $\Omega_1(f)$ belongs.  Iterating the following two lemmas will allow us 
to achieve this shrinkage. In the lemmas that follow we continue with the notation given in (\ref{f-tail-expression}).

\begin{lemma}\label{shrinkage}  
Let $\ell\in\{0,\ldots,q-1\}$, let $\alpha$ be a rational number with $0\leq \alpha\leq r$,  and let
\begin{equation}\label{rectangular}
V_{r,\alpha} :=  \Span_{\mathbb F_q}   \left\{  \left(  \frac{(xy)^i M_r(xy,y)^j}{    y^{-1}Q_r(xy,y) }  \right):     
r-\alpha\leq i\leq r+h   , \,0\leq j\leq d-1 \right\}      \,.
\end{equation}
Then we have 

\begin{itemize}

\smallskip

\item[{\rm (i)}] $\Lambda_{\ell,\ell}(V_{r,\alpha}) \subset  V_{r,\frac{\alpha}{q} + 1 -\frac{1}{q}},$

\smallskip

\item[{\rm (ii)}] $V_{r,\alpha}=V_{r,0}$ if $\alpha <1$, and

\smallskip

\item[{\rm (iiii)}] $\Lambda_{\ell,\ell}(V_{r,0})\subset V_{r,0}$.
\end{itemize}
Furthermore,  if  $\ell\leq q-2$ then $\Lambda_{\ell,\ell} (V_{r,1}) \subset V_{r,0}$.
\end{lemma}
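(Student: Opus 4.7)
To prove the lemma I would handle parts (ii) and (iii) first since they follow once part (i) is in hand. Part (ii) is immediate from the integer nature of $i$: if $\alpha<1$ then $r-\alpha>r-1$ forces $i\ge r$, so the constraint $r-\alpha\le i\le r+h$ defines the same integer set as $r\le i\le r+h$, and hence $V_{r,\alpha}=V_{r,0}$. Part (iii) follows by applying part (i) at $\alpha=0$ to conclude $\Lambda_{\ell,\ell}(V_{r,0})\subset V_{r,1-1/q}$, and then invoking part (ii) to identify $V_{r,1-1/q}$ with $V_{r,0}$.

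The substance lies in part (i). Fix a generator $\varphi=(xy)^i M_r(xy,y)^j/\Phi$ of $V_{r,\alpha}$, where $\Phi:=y^{-1}Q_r(xy,y)$, $r-\alpha\le i\le r+h$, and $0\le j\le d-1$. Applying the Cartier operator to the fraction field:
\[
\Lambda_{\ell,\ell}(\varphi)=\Lambda_{\ell,\ell}(N)/\Phi,\qquad N:=(xy)^{i}M_r^{j}\Phi^{q-1}\in\mathbb F_q[x,y].
\]
The key structural input is that $\Phi$ is divisible by $(xy)^{r}$ in $\mathbb F_q[x,y]$: writing $Q_r(x,y)=\sum_{k=0}^{d}B_k(x)y^{k}$ as in the proof of Lemma \ref{separation-lemma}, one has $B_k(x)=x^{rk}E_k(x)$ for $k\ge 1$ and $B_0(x)=P(x,V_r(x))$ of order at least $r+1$ at zero, so every term of $\Phi=B_0(xy)/y+\sum_{k\ge1}B_k(xy)y^{k-1}$ is divisible by $(xy)^{r}$. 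Writing $\Phi=(xy)^{r}\Psi$ with $\Psi\in\mathbb F_q[x,y]$ gives $N=(xy)^{i+r(q-1)}M_r^{j}\Psi^{q-1}$. Splitting $i+r(q-1)=qa^{*}+b^{*}$ with $0\le b^{*}\le q-1$ and using $\Lambda(FG^q)=\Lambda(F)G$ with $G=(xy)^{a^{*}}$:
\[
\Lambda_{\ell,\ell}(N)=(xy)^{a^{*}}\cdot\Lambda_{\ell,\ell}\bigl((xy)^{b^{*}}M_r^{j}\Psi^{q-1}\bigr).
\]
An elementary computation (just comparing $qa^{*}=i+r(q-1)-b^{*}$ with $q(r-\alpha')=qr-\alpha-q+1$ using $b^{*}\le q-1$) gives $a^{*}\ge r-\alpha'$ for every admissible $i$, producing the sought lower bound on the $(xy)$-exponent.

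It remains to express the inner factor in the basis $\{(xy)^{i'}M_r^{j'}\}$. Tracking monomials $x^Ay^B$: the factor $(xy)^{b^{*}}M_r^{j}$ contributes $B-A\in\{0,\ldots,j\}$ while $\Psi$ (like $\Phi$) contributes $B-A\in\{-1,\ldots,d-1\}$; after $\Lambda_{\ell,\ell}$ (which forces $B-A\equiv0\pmod q$) and division by $q$, the result lies in $\mathbb F_q[xy][y]_{<d}$, so it decomposes as $\sum_{k=0}^{d-1}F_k(xy)y^{k}$. A degree count using $\deg_x\Phi\le rd+h$ bounds the $(xy)$-degrees of the $F_k$, yielding the upper bound $i'\le r+h$. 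Inverting the triangular change of basis $y^{k}=(xy)^{-rk}(M_r-V_r(xy))^{k}$ and absorbing the $(xy)^{a^{*}}$ prefactor produces the required $\mathbb F_q$-linear combination $\sum c_{i',j'}(xy)^{i'}M_r^{j'}$ with $r-\alpha'\le i'\le r+h$ and $0\le j'\le d-1$. For the supplementary assertion, at the extremal value $i=r-1$ we have $a^{*}=r-1$ and $b^{*}=q-1$; if $\ell\le q-2<b^{*}$, then the smallest $A\equiv\ell\pmod q$ appearing in the numerator is $q+\ell$ rather than $\ell$, so $\Lambda_{\ell,\ell}$ strips one extra factor of $xy$, upgrading $a^{*}$ to $r$ and landing the image in $V_{r,0}$.

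The main obstacle I anticipate is making the final change of basis from $\{y^{k}\}$ to $\{M_r^{j'}\}$ rigorous while remaining inside $\mathbb F_q[x,y]$ with $i'$-indices in the prescribed range: this requires careful bookkeeping of $(xy)$-adic divisibilities, combining the $(xy)^{a^{*}}$ prefactor with the factor $(xy)^{r(q-1)}$ already extracted from $\Phi^{q-1}$ via $\Psi^{q-1}$, together with the triangular structure of $M_r$.
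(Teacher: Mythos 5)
Your handling of parts (ii), (iii) and the reduction of $\Lambda_{\ell,\ell}(\varphi)$ to the numerator via \eqref{eq: gq} matches the paper. Your computation that $\Phi := y^{-1}Q_r(xy,y)$ is divisible by $(xy)^r$ is correct, and the extraction $a^\ast\geq r-\alpha'$ with $\alpha'=\alpha/q+1-1/q$ is the right lower bound (the paper gets the slightly sharper $r-\alpha/q-\ell/q$ by computing the exact exponent, but either suffices for (i)).

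The genuine gap is exactly the one you flag at the end, and it is more than a bookkeeping detail. By factoring $\Phi^{q-1}=(xy)^{r(q-1)}\Psi^{q-1}$ up front and then tracking $B-A$ modulo $q$, you land in the basis $\{F_k(xy)\,y^k: 0\leq k\leq d-1\}$. To return to the spanning set of $V_{r,\alpha'}$ you must convert $y^k$ to $M_r$-powers via $y^k=(xy)^{-rk}\bigl(M_r(xy,y)-V_r(xy)\bigr)^k$, and after multiplying by $(xy)^{a^\ast}$ you need $a^\ast+\operatorname{ord}_{xy}F_k-rk\geq r-\alpha'$. Since $a^\ast\geq r-\alpha'$ only gives slack of size $a^\ast-(r-\alpha')\geq 0$, this amounts to showing $\operatorname{ord}_{xy}F_k\gtrsim rk$, which the $B-A$ congruence bookkeeping does not provide and which you do not prove. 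The paper avoids this entirely by never leaving the $M_r$-basis: it expands $Q_r(xy,y)^{q-1}$ as $\sum A_{k_1}(xy)\cdots A_{k_{q-1}}(xy)M_r(xy,y)^{k_1+\cdots+k_{q-1}}$, writes $M_r^{j'}=M_r^{j'\bmod q}\cdot\bigl(M_r^{\lfloor j'/q\rfloor}\bigr)^q$, pulls the $q$-th power out through \eqref{eq: gq}, and observes that $\Lambda_{\ell,\ell}\bigl((xy)^{i'}y^{1-q}M_r(xy,y)^{j'\bmod q}\bigr)\neq 0$ forces $j'\equiv q-1\pmod q$ with only the monomial $(xy)^{r(q-1)}y^{q-1}$ of $M_r^{q-1}$ surviving. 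This produces the output directly as $(xy)^{(i'+r(q-1)-\ell)/q}M_r^{\lfloor j'/q\rfloor}$, with the $i'$-range and $j'$-range read off immediately and no change of basis needed.

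The same issue undermines the supplementary claim: your observation that $\ell<b^\ast=q-1$ forces the smallest admissible $x$-exponent in the numerator to be $q+\ell$ is correct in spirit, but you still need the change of basis to conclude membership in $V_{r,0}$. In the paper's framing the supplementary claim falls out of \eqref{eq: shrink2} with no extra work: $\bigl(i'+r(q-1)-\ell\bigr)/q> r-1$ once $\ell\leq q-2$ and $\alpha=1$.
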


\begin{proof}
Note that (i) and (ii) implies (iii), while (ii) is trivial, so we just have to prove (i).  
Let $i$ and $j$ be two integers with 
\begin{equation}\label{eq: ij}
r-\alpha\leq i\leq r+h \;\;\mbox{ and }\;\; 0\leq j\leq d-1\,,
\end{equation}
so that 
$$
 \frac{(xy)^i M_r(xy,y)^j}{    y^{-1}Q_r(xy,y) } \in V_{r,\alpha}\,.
$$
We first infer from \eqref{eq: gq} that 
\begin{equation}\label{eq: pol}
\Lambda_{\ell,\ell}\left( \frac{(xy)^i M_r(xy,y)^j}{    y^{-1}Q_r(xy,y) }  \right) 
= \frac{\Lambda_{\ell,\ell}\left((xy)^i M_r(xy,y)^jy^{1-q}Q_r(xy,y)^{q-1}\right)}{y^{-1}Q_r(xy,y)} \, .
\end{equation}
Developing $Q_r(xy,y)^{q-1}$, we note that $ (xy)^i M_r(xy,y)^j   y^{1-q}Q_r(xy,y)^{q-1}$  is an 
$\mathbb F_q$-linear combination of elements of the form 
$y^{1-q}(xy)^{i+n} M_r(xy,y)^{j+m}$ 
where
$0\leq  n \leq (q-1)h$ and  $0\leq  m\leq d(q-1)$. 
Setting $i'=i+n$ and $j'=j+m$, we obtain
\begin{equation}\label{eq: i'j'}
r-\alpha \leq i'\leq r+qh \;\;\mbox{ and }\;\;0\leq j' \leq qd-1 \,.
\end{equation}
Recall that $\Lambda_{\ell,\ell}( x^{a}y^{b})$ is nonzero if and only if $a\equiv b \equiv \ell \bmod q$. 
Using Property \eqref{eq: gq}, we obtain that either 
$\Lambda_{\ell,\ell}(y^{1-q}(xy)^{i'} M_r(xy,y)^{j'})=0$, or 

\begin{eqnarray}\label{applying_Cartier-0}      
\Lambda_{\ell,\ell}\left(   (xy)^{i'} y^{1-q}M_r(xy,y)^{j'} \right)      &  = & 
     \Lambda_{\ell, \ell} \left( (xy)^{ i'} y^{1-q}( M_r(xy,y)^{j'\bmod q}     \right)       M_r(xy,y)^{\lfloor \frac{j'}{q}\rfloor}  
  \nonumber \\ &   {\substack{*\\=}}&  
   \Lambda_{\ell, \ell} \left( (xy)^{i'}    (xy)^{r(q-1)}     \right)      M_r(xy,y)^{\lfloor \frac{j'}{q}\rfloor}     \nonumber \\ & = &
    (xy)^{\frac{i' +r(q-1)-\ell}{q}}       M_r(xy,y)^{\lfloor \frac{j'}{q}\rfloor}, 
    \end{eqnarray}
where the asterisked equality follows because the only way this expression is nonzero is if $j'\equiv q-1\bmod q$, 
and in this case only the  $(xy)^{r(q-1)}y^{q-1}$  term  in $M_r(xy,y)^{q-1}$ will lead to a nonzero term after application of $\Lambda_{\ell,\ell}$. 
By \eqref{eq: i'j'},  we have 
\begin{equation}\label{eq: shrink2}
0\leq \lfloor j'/q\rfloor \leq d-1 \;\;\mbox{ and }\;\; r-(\alpha/q+1-1/q)\leq \frac{i' +r(q-1)-\ell}{q}\leq r+h \,,
\end{equation}
and we infer from \eqref{eq: pol} and \eqref{applying_Cartier-0} 
that $\Lambda_{\ell,\ell}\left(V_{r,\alpha} \right) \subset V_{r,\alpha/q+1-1/q}$. 
Finally, inspection of the case $\ell<q-1$ and $\alpha=1$ gives us  $\Lambda_{\ell,\ell}\left(V_{r,1} \right)\subset V_{r,0}$. 
\end{proof}
 
 We also have the following similar result.   
   
\begin{lemma}\label{shrinkage2}  
Let
$$
V^+_{r,\alpha} :=  \Span_{\mathbb F_q}   \left\{  \left(  \frac{(xy)^i M_r(xy,y)^j}{    y^{-1}Q_r(xy,y) }  \right):     
r-\alpha\leq i\leq r+h-1   , \,0\leq j\leq d-1 \right\}      \,.
$$
Then
\begin{itemize}

\smallskip

\item[{\rm (i')}] $\Lambda_{\ell,\ell}(V^+_{r,\alpha}) \subset    V^+_{r,\frac{\alpha}{q} + 1 -\frac{1}{q}}$, 

\smallskip

\item[{\rm (ii')}] $V^+_{r,\alpha}=V^+_0$ if $\alpha <1$, and

\smallskip

\item[{\rm (iii')}] $\Lambda_{\ell,\ell}(V^+_{r,0})\subset V^+_{r,0}$.  
\end{itemize}
Furthermore, if $\ell\geq 1$ then $\Lambda_{\ell,\ell}(V_{r,\alpha}) \subset  V^+_{r,\frac{\alpha}{q} + 1 -\frac{1}{q}}  $. 
\end{lemma}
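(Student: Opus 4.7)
The plan is to mirror the proof of Lemma \ref{shrinkage} essentially verbatim, since the definitions of $V^+_{r,\alpha}$ and $V_{r,\alpha}$ differ only by tightening the upper bound on the exponent $i$ from $r+h$ to $r+h-1$. First I would fix a generator $(xy)^i M_r(xy,y)^j/(y^{-1}Q_r(xy,y))$ of $V^+_{r,\alpha}$, use Property \eqref{eq: gq} exactly as in \eqref{eq: pol} to pull $y^{-1}Q_r(xy,y)$ out of the denominator, and then expand the resulting numerator as an $\mathbb F_q$-linear combination of monomials $y^{1-q}(xy)^{i'} M_r(xy,y)^{j'}$ with $r-\alpha \le i' \le r + qh - 1$ and $0\le j' \le qd-1$, the only change relative to Lemma \ref{shrinkage} being that $i'$ decreases strictly by $1$ at the upper end. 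The effect of $\Lambda_{\ell,\ell}$ on any such monomial is still given by the computation in \eqref{applying_Cartier-0}, producing $(xy)^{(i'+r(q-1)-\ell)/q} M_r(xy,y)^{\lfloor j'/q\rfloor}$.

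The one genuinely new piece of bookkeeping is the upper bound on the output exponent. From $i' \le r+qh-1$ I would compute
\[
\frac{i' + r(q-1) - \ell}{q} \,\le\, r + h - \frac{1+\ell}{q} \,<\, r+h,
\]
and since this quantity is necessarily an integer (precisely the divisibility condition $i' \equiv \ell \bmod q$ needed for the Cartier image to be nonzero), it is at most $r+h-1$. The lower bound $\ge r-(\alpha/q + 1 - 1/q)$ and the bound $\lfloor j'/q\rfloor \le d-1$ both carry over from Lemma \ref{shrinkage} without modification. Together these prove (i'); part (ii') is the same triviality, and (iii') follows by combining (i') with (ii') as before.

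For the final assertion, I would start instead with a generator of $V_{r,\alpha}$, so only the weaker bound $i \le r+h$ is available and hence $i' \le r + qh$; the hypothesis $\ell \ge 1$ then provides the missing slack through
\[
\frac{i' + r(q-1) - \ell}{q} \,\le\, r + h - \frac{\ell}{q} \,<\, r+h,
\]
which again forces the integer value to be at most $r+h-1$, placing the image in $V^+_{r,\alpha/q + 1 - 1/q}$.

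The main obstacle, if one can call it that, is purely bookkeeping: one has to carefully convert the strict real-valued inequality ``$<r+h$'' into the integer inequality ``$\le r+h-1$'' by invoking the divisibility condition underlying nonvanishing of the Cartier image, and keep straight whether the required strict decrease comes from the tightened bound on $i$ (proving (i')) or from excluding $\ell = 0$ (proving the final clause). All other ingredients are exactly as in Lemma \ref{shrinkage}.
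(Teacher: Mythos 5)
Your proof is correct and follows essentially the same route as the paper's, which simply defers to the argument of Lemma \ref{shrinkage} and to inequality \eqref{eq: shrink2}; you fill in the integrality step that converts the strict real bound $<r+h$ into the integer bound $\le r+h-1$, which is precisely what drives both (i') and the final clause. One small slip worth noting: the congruence guaranteeing a nonzero Cartier image is $i'+r(q-1)\equiv\ell \pmod q$ (equivalently $i'\equiv\ell+r\pmod q$), not $i'\equiv\ell\pmod q$, but the fact you actually use---that $(i'+r(q-1)-\ell)/q$ is an integer whenever the image is nonzero---is correct and unaffected.
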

 
 \begin{proof}
 The proof of (i'), (ii'), and (iii') follows the same argument as in the proof of Lemma \ref{shrinkage}. 
The fact that  $\Lambda_{\ell,\ell}(V_{r,\alpha}) \subset    V^+_{r,\frac{\alpha}{q} + 1 -\frac{1}{q}}  $ when $\ell\geq 1$ is a direct 
consequence of 
\eqref{eq: shrink2}, since $i'+r(q-1)-\ell/q<r+h$ when $\ell \geq 1$. 
 \end{proof}

\begin{lemma}\label{bound-large}
Let  $\ell\in\{0,\ldots,q-1\}$.  
Then
$\Lambda_\ell(f^{(r)}) \in \Delta(V_{r,\frac{r}{q} + 1 -\frac{1}{q}})     $. 
Furthermore, if $\ell\geq 1$, then $\Lambda_\ell(f^{(r)}) \in   \Delta(V^+_{r,\frac{r}{q} + 1 -\frac{1}{q}})$. 
\end{lemma}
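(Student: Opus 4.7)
The plan is to combine the Furstenberg representation of $f^{(r)}$ provided by Lemma \ref{separation-lemma} with the commutation $\Lambda_\ell\!\circ\!\Delta = \Delta\!\circ\!\Lambda_{\ell,\ell}$ of \eqref{eq: commute} and the Cartier identity \eqref{eq: gq}. Writing $N_0 := y\,\frac{\partial Q_r}{\partial y}(xy,y)$ and $D := y^{-1}Q_r(xy,y)$, this yields
$$
\Lambda_\ell(f^{(r)}) \;=\; \Delta\!\left(\frac{\Lambda_{\ell,\ell}(N_0 D^{q-1})}{D}\right),
$$
so everything reduces to showing that $\Lambda_{\ell,\ell}(N_0 D^{q-1})$ is an $\F_q$-linear combination of monomials $(xy)^i M_r(xy,y)^j$ with $r-\alpha \le i \le r+h$ and $0\le j\le d-1$ for $\alpha = r/q+1-1/q$, and with the sharpened upper bound $i\le r+h-1$ when $\ell\ge 1$.

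I would first expand $N_0 D^{q-1}$ using $Q_r(x,y)=P(x,M_r(x,y))$ together with $\frac{\partial Q_r}{\partial y}(x,y) = x^r\,\frac{\partial P}{\partial y}(x,M_r(x,y))$, so that $N_0 D^{q-1}$ becomes an $\F_q$-linear combination of monomials $(xy)^{r+a}y^{2-q}M_r(xy,y)^b$ with $0\le a\le hq$ and $0\le b\le dq-1$; this only uses $\deg A_i\le h$ and $\deg_y P\le d$. It then suffices to analyse $\Lambda_{\ell,\ell}$ on each such monomial. Writing $r+a = qC_1+C_0$ and $b=qB_1+B_0$ with $0\le C_0,B_0\le q-1$, the identity \eqref{eq: gq} lets us factor out $(xy)^{C_1}M_r(xy,y)^{B_1}$—note that $B_1\in\{0,\dots,d-1\}$ already supplies the required $j$-range—reducing everything to evaluating $\Lambda_{\ell,\ell}\bigl((xy)^{C_0}y^{2-q}M_r(xy,y)^{B_0}\bigr)$.

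For this residual I would expand $M_r(xy,y)^{B_0}=\sum_{k=0}^{B_0}\binom{B_0}{k}V_r(xy)^{B_0-k}(xy)^{rk}y^k$. The condition that $\Lambda_{\ell,\ell}$ not kill a monomial $x^Ay^B$ forces $B-A\equiv 0\pmod q$, which here reads $k\equiv q-2\pmod q$; since $0\le k\le B_0\le q-1$, only $B_0\in\{q-2,q-1\}$ contribute, and in both cases $k=q-2$ is forced. A short calculation then shows that each surviving term contributes a single monomial $(xy)^{i}M_r(xy,y)^{B_1}$ with
$$
i \;=\; C_1 + \frac{C_0 + r(q-2) + n - \ell}{q} \;=\; r + \frac{a+n-r-\ell}{q},
$$
where $n=0$ when $B_0=q-2$ and $0\le n\le r$ when $B_0=q-1$; here $n$ is the $xy$-exponent arising from $V_r(xy)^{B_0-(q-2)}$ after selection by the $\ell$-congruence on the $x$-exponent.

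Using $0\le a\le hq$ and $0\le\ell\le q-1$, the extremes of $a+n-r-\ell$ show that $i\in\bigl[r-(r/q+1-1/q),\,r+h\bigr]$, which is precisely the range defining $V_{r,\alpha}$, giving the first assertion. For the refinement with $\ell\ge 1$, the inequality $a+n-r-\ell\le hq-1$ forces the integer quotient to be at most $h-1$, hence $i\le r+h-1$, and the conclusion lands in $\Delta(V^+_{r,\alpha})$. The main obstacle I anticipate is organizing the $B_0=q-1$ case cleanly: the factor $V_r(xy)$ injects the extra shift $n\in\{0,\dots,r\}$, and one has to track it simultaneously with the congruence $k\equiv q-2\pmod q$ and the $\ell$-congruence on the $x$-exponent in order to see that the optimal lower bound is exactly $r-(r/q+1-1/q)$ rather than something weaker. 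Once this bookkeeping is set up, the required bounds fall out of elementary arithmetic on $a$, $n$, and $\ell$.
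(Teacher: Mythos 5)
Your proposal is correct and follows essentially the same route as the paper's proof: combine Furstenberg's representation of $f^{(r)}$ with the commutation \eqref{eq: commute} and \eqref{eq: gq} to reduce to tracking $\Lambda_{\ell,\ell}$ on monomials $(xy)^{r+a}y^{2-q}M_r(xy,y)^b$, observe that only the residues $b\equiv q-2,\,q-1\pmod q$ (equivalently $k=q-2$ in the binomial expansion of $M_r^{B_0}$) survive, and then read off the resulting exponent $i=r+\tfrac{a+n-r-\ell}{q}$ to land in the stated range. The only difference is cosmetic bookkeeping: the paper factors out only $M_r(xy,y)^{\lfloor j/q\rfloor}$ via \eqref{eq: gq}, keeping $(xy)^i$ whole, whereas you additionally split $r+a=qC_1+C_0$; the resulting exponent computation and inequalities are identical.
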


\begin{proof}
 To simplify notation we let  $V(x):=V_r(x)$, $g(x):= f^{(r)}(x)$, and $Q:=Q_r$.   
Let $M(x,y)= V(x)+x^r y$, so that $g(x)$ is a root of the polynomial $Q(x,y):=\sum_{k=0}^d A_k(x)  M(x,y)^{k}$. 
By Lemma \ref{separation-lemma}, we have 
\begin{equation}\label{eq: gdiag}
   g(x) =      \Delta \left( \frac{y\frac{ \partial Q }{\partial y}(xy,y)}
      {y^{-1}Q(xy,y)}\right)  \, \cdot
\end{equation}
Let $\ell\in\{0,\ldots,q-1\}$. 
We first infer from \eqref{eq: gq} that 
\begin{equation}\label{eq: pol2}
\Lambda_{\ell,\ell}\left( \frac{y\frac{ \partial Q }{\partial y}(xy,y)}{    y^{-1}Q(xy,y) }  \right) 
= \frac{\Lambda_{\ell,\ell}\left(y^{2-q}\frac{ \partial Q }{\partial y}(xy,y)Q(xy,y)^{q-1}\right)}{y^{-1}Q(xy,y)} \,\cdot
\end{equation}
Notice that $\frac{ \partial Q }{\partial y}(xy,y)Q(xy,y)^{q-1}$ belongs to 
$$
\Span_{\mathbb F_q}\left\{  (xy)^i M(xy,y)^j :  r\leq i\leq r+qh   , \,0\leq j\leq qd-1\right\}\,.
$$
Let $i$ and $j$ be two such integers. Equality \eqref{eq: gq} implies that 
 \begin{equation}\label{applying-Cartier-0}      
 \Lambda_{\ell,\ell}\left( y^{2-q} (xy)^{i}M(xy,y)^{j}    \right)        = 
     \Lambda_{\ell, \ell} \left(y^{2-q} (xy)^{        i  } M(xy,y)^{j\bmod q}     \right)       M(xy,y)^{\lfloor \frac{j}{q}\rfloor}  
    \end{equation}
    and thus either 
 $\Lambda_{\ell,\ell} \left(  y^{2-q}(xy)^{i}M(xy,y)^{j}     \right)=0$ 
or $ \Lambda_{\ell, \ell} \left(y^{2-q} (xy)^{  i  } M(xy,y)^{j\bmod q}     \right)\not=0$, which is only possible 
if  $j\equiv q-2 \bmod q$ or $j\equiv  q-1 \bmod q$.  
Continuing from (\ref{applying-Cartier-0}), if $j\equiv q-2 \bmod q$,
$$ 
\Lambda_{\ell,\ell}\left( y^{2-q} (xy)^{i}M(xy,y)^{j}    \right) =   (xy)^{\frac{i+(q-2)r-\ell}{q}} M(xy,y)^{\lfloor \frac{j}{q}\rfloor}\, ,  
$$ 
and if $j\equiv q-1 \bmod q$,
\begin{equation*} \Lambda_{\ell,\ell}\left( y^{2-q} (xy)^{i}M(xy,y)^{j}    \right)  =   - \Lambda_{\ell,\ell}\left(        (xy)^{i+(q-2)r}  V(xy)   \right) M(xy,y)^{\lfloor \frac{j}{q}\rfloor} \,.
\end{equation*}
 In all cases, $\Lambda_{\ell,\ell}\left( y^{2-q} (xy)^{i}M(xy,y)^{j} \right)$ belongs to 
 $$
 \Span_{\mathbb F_q}\left\{  (xy)^{i'} M(xy,y)^{j'} :  r-r/q-\ell /q \leq i'\leq r+h-\ell/q   , \,0\leq j'\leq d-1\right\}\,.
 $$
 Combining this with \eqref{eq: pol2}, we get that 
 $$
 \Lambda_{\ell,\ell}\left( \frac{y\frac{ \partial Q }{\partial y}(xy,y)}{    y^{-1}Q(xy,y) }  \right) \in V_{r,\frac{r}{q} + 1 -\frac{1}{q}}    \,.
 $$
 Furthermore if $\ell\geq 1$, we see that 
 $$
  \Lambda_{\ell,\ell}\left( \frac{y\frac{ \partial Q }{\partial y}(xy,y)}{    y^{-1}Q(xy,y) }  \right) \in V^+_{r,\frac{r}{q} + 1 -\frac{1}{q}}  \,.
 $$
By Equalities \eqref{eq: commute} and \eqref{eq: gdiag}, this ends the proof.    
\end{proof}

Before proving Theorem \ref{bounding-f-kernel}, we need a last auxiliary result, 
which gives the intertwining relations between the operators $\Lambda_\ell$ and multiplication 
by a power of $x$. Its proof is straightforward.

 \begin{lemma}\label{intertwining} Let $0\leq j \leq q-1$ and $0 \leq \ell \leq q-1$. Then 
 \begin{equation*}
 \Lambda_{\ell}(x^j f(x))=
\begin{cases}
  \Lambda_{\ell-j} (f(x))  & \text{ if } j\leq \ell\\   
x\Lambda_{q+\ell -j}(f(x))  & \text{ if } j>\ell. 
\end{cases}
\end{equation*} 

 \end{lemma}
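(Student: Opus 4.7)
The plan is to prove the identity by a direct manipulation of the power series coefficients, using the definition of $\Lambda_\ell$. Write $f(x)=\sum_{n\geq 0}a_nx^n$, so that $x^jf(x)=\sum_{N\geq j}a_{N-j}x^N$. The operator $\Lambda_\ell$ extracts those coefficients whose index lies in the arithmetic progression $\ell+q\N$. Thus I would compute
$$
\Lambda_\ell(x^jf(x)) = \sum_{\substack{m\geq 0 \\ mq+\ell\geq j}} a_{mq+\ell-j}\,x^m,
$$
and then split into two cases depending on whether $\ell-j$ is still in the admissible digit range $\{0,\ldots,q-1\}$.

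In the first case $j\leq\ell$, the constraint $mq+\ell\geq j$ is automatic for every $m\geq 0$, and $0\leq \ell-j\leq q-1$, so the sum is immediately recognised as $\Lambda_{\ell-j}(f(x))$. In the second case $j>\ell$, one has $\ell<j\leq q-1$, so $mq+\ell\geq j$ forces $m\geq 1$. Reindexing by $m'=m-1$ and using that $q+\ell-j$ lies in $\{1,\ldots,q-1\}$, the sum becomes $x\sum_{m'\geq 0}a_{m'q+(q+\ell-j)}x^{m'}=x\,\Lambda_{q+\ell-j}(f(x))$. This gives the claimed formula.

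There is no real obstacle: the whole argument is bookkeeping on residues modulo $q$, and the only points to check are that $\ell-j$ and $q+\ell-j$ fall in the correct digit ranges in each case, which is what the hypotheses $0\leq\ell,j\leq q-1$ ensure. The paper itself advertises the proof as straightforward, so the role of this lemma is just to record, in a clean form, the commutation rule between the Cartier operators $\Lambda_\ell$ and multiplication by low powers of $x$; this is what will be needed later to transfer bounds on $\Omega_1(f^{(r)})$ (obtained via Lemmas \ref{shrinkage}--\ref{bound-large}) to bounds on $\Omega_1(f)$ through the relation $f=V_r+x^rf^{(r)}$.
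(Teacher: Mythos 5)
Your computation is correct and is exactly the ``straightforward'' verification the paper alludes to without writing out (the paper simply states the proof is straightforward and omits it). Shifting indices, noting that $b_N=a_{N-j}$ with $b_N=0$ for $N<j$, checking that $\ell-j$ (resp.\ $q+\ell-j$) lies in $\{0,\ldots,q-1\}$ in the two cases, and reindexing by $m'=m-1$ in the second case are all handled properly, so the proof is complete.
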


 Note in particular that Lemma \ref{intertwining} implies that $\Lambda_0^{n}(xf(x))= x\Lambda_{q-1}^{n}(f(x))$ for each $n>0$.

 \begin{proof}[Proof of Theorem \ref{bounding-f-kernel} ]
 Recall that 
$$
f(x)=a_0+a_1x+\cdots+a_rx^r+x^rf^{(r)}(x)\,.
$$
Lemma \ref{bound-large} implies that $\Lambda_\ell(f^{(r)})\in      \Delta(V_{r,\frac{r}{q} + 1 -\frac{1}{q}}) $ for all $\ell$ and that 
$\Lambda_\ell(f^{(r)})\in   \Delta(V^+_{r,\frac{r}{q} + 1 -\frac{1}{q}}) $ if $\ell \geq 1$.
Let  
\begin{equation}\label{eq:t}
t_0=\lfloor \log_q r\rfloor  \,.
\end{equation} 
Using Properties \eqref{eq: gq} and \eqref{eq: pol0}, 
we obtain that for $t\geq t_0$, 
for any $( \ell_t, \ldots \ell_0)$, there is a unique $(\tilde  \ell_t, \ldots\tilde  \ell_0)$ and a unique $i \in \{0,1\}$ such that
$$ \Lambda_{\ell_t} \Lambda_{\ell_{t-1}}  \cdots  \Lambda_{\ell_0} (x^rf^{(r)})=  x^{i}\Lambda_{\tilde \ell_t} \Lambda_{\tilde \ell_{t-1}}  \cdots  \Lambda_{\tilde\ell_0} (f^{(r)}).$$
Iterating Lemmas \ref{shrinkage} and  \ref{shrinkage2},  we have:
\begin{itemize}
\smallskip
\item[{\rm (a)}]  If $\ell_t q^t+ \ell_{t-1}   q^{t-1}+ \cdots 
 + \ell_1 q +\ell_0> r$, then 
 $$
 \Lambda_{\ell_t} \Lambda_{\ell_{t-1}}  \cdots  \Lambda_{\ell_0} (f)    = \Lambda_{\tilde \ell_t} \Lambda_{\tilde \ell_{t-1}}  \cdots  \Lambda_{\tilde\ell_0} (f^{(r)})\in \Delta(V_{r,0}^+) \,, $$ as
 for at least one $j\geq t_0$,  we have $\tilde \ell_{j}$ is nonzero,  so that we can apply Lemma \ref{shrinkage2}.
 \vspace{1.5em}
\item[{\rm (b)}]  If $\ell_t q^t+ \ell_{t-1}   q^{t-1}+ \cdots 
 + \ell_1 q +\ell_0 = r$, then 
 $$
 \Lambda_{\ell_t} \Lambda_{\ell_{t-1}}  \cdots  \Lambda_{\ell_0} (f) = a_r + \Lambda_0^{t+1}(f^{(r)})\in \F_q+  \Delta(V_{r,0})\,.
 $$
 \smallskip
\item[{\rm (c)}]   If $\ell_t q^t+ \ell_{t-1}   q^{t-1}+ \cdots 
 + \ell_1 q +\ell_0< r$, then 
 $$
 \Lambda_{\ell_t} \Lambda_{\ell_{t-1}}  \cdots  \Lambda_{\ell_0} (f) = \alpha+x       \Lambda_{\tilde \ell_t} \Lambda_{\tilde \ell_{t-1}}  \cdots  \Lambda_{\tilde\ell_0} (f^{(r)})  
  \in  \F_q + x       \Delta(V^+_{r,1})\, ,$$
  since $\tilde \ell_{t}= p-1$ for $j\geq t_0$, so that we can apply Lemmas \ref{shrinkage} and \ref{shrinkage2}.
 \end{itemize}
 
 We stress that $\ell_t$ can be zero, so that each of cases (b) and (c) can correspond to  arbitrarily long  compositions of Cartier operators.
Notice that 
\begin{eqnarray*}  x       \Delta(V^+_{r,1}) &=& \Delta(xy V_{r,1}^+)\\& = & \Delta (V_{r,0})\, .\end{eqnarray*}
Meanwhile, $\Delta(V_{r,0}^+) \subset \Delta(V_{r,0})$, so that  the total contribution from (a), (b) and (c) combined is of dimension at most $(h+1)d+1$.

On the other hand, defining 
\begin{equation}\label{initial}\mathcal E(f)= \{  \Lambda_{\ell_t} \cdots \Lambda_{\ell_0} (f): t <   t_0    \}\, ,\end{equation}  
we claim that $\vert \mathcal E(f) \vert = o(1)q^{hd}$, where the the $o(1)$ term tends to $0$ for 
large values of any of $q,d,h$.  
Indeed, $\vert \mathcal E(f) \vert \leq q^{t_{0}}$, while  by \eqref{determinant} we have 
 $r\leq h(2d-1)$.  Hence Equality \eqref{eq:t}  gives the claim.  
We conclude that 
$$
\vert \Omega_1(f) \vert \leq  q^{(h+1)d+1}+o(q^{hd}).$$ Now \eqref{eq: Eil}  completes the proof of  Theorem \ref{bounding-f-kernel}.
\end{proof}

\begin{remark}
Note that in the smooth case $r=0$, there is no contribution from (c) above, and, since $a_0=0$, we have   $\Omega_1(f)\backslash\{f \}\subset \Delta(V_{0,0})$, so that we recover Theorem \ref{smooth-case}. Also for the smooth case, (b) is precisely the orbit of $f$ under  $\Lambda_0$, and Bridy deals with this orbit by showing that its cardinality is at most $o(q^{hd})$; this is where Bridy uses additional arguments involving  Landau's function.

Similarly, in the general case,  Cases (b) and (c) correspond to $r+1$ infinite orbits, all of whose size could be  bounded by Bridy's arguments. This should lead to 
\begin{eqnarray*}\Omega_1(f) & = &  |\Delta(V_r,0^{+})| +o(q^{hd}) \\ &  = & (1+o(1))q^{hd},\end{eqnarray*}
which is precisely Bridy's bound \eqref{eq: bridy2}.
This would have identified (a) as containing the bulk of $f$'s kernel. 
 However it is not clear how to bound the contributions from (b) and (c) in an elementary fashion.

\end{remark}

\section{Bounds for the direct reading complexity}\label{direct}

A  sequence ${\bf a} = (a_n)_{n \geq 0}$ can be $q$-automatic in either {\em reverse} or {\em direct} reading. In the former case, we feed the base-$q$ expansion of $n$ into the $q$-automaton starting with the least significant digit, and in the latter, starting with the most significant digit. Now  ${\bf a}$ is $q$-automatic in direct reading if and only if it is $q$-automatic in reverse reading, but the direct and reverse reading minimal automata that generate  ${\bf a}$ are generally different and thus there are two notions of complexity; thus far we have bounded the reverse complexity $\comp_q({\bf a})$. We now wish to bound the forward reading complexity 
$\stackrel{\longrightarrow}{\comp_q}({\bf a})$, which equals the number of states in a minimal $q$-automaton generating ${\bf a}$. For the sake of symmetry, henceforth we write 
$\stackrel{\longleftarrow}{\comp_q}({\bf a})$ instead of $\comp_q({\bf a})$.

If we start with a reverse reading  $q$-automaton whose states are labelled using 
elements of an $\F_q$-vector space of dimension $k$, then  using basic duality theory  for vector spaces, 
we obtain the following result. 

\begin{proposition}\label{prop: cartier}
Let $f(x)=\sum_{n=0}^{\infty} a_nx^n\in \mathbb F_q[[x]]$. If there exists  
a $\mathbb F_q$-vector space $V$ of dimension $m$ that contains $f$ and that is invariant under the action of $\Omega_1$,  
then both $\stackrel{\longrightarrow}{\comp_q}({\bf a})$ and $\stackrel{\longleftarrow}{\comp_q}({\bf a})$ 
are bounded by $q^m$.   
\end{proposition}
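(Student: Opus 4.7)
The plan has two independent parts, one for each of the two complexities.

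For the reverse reading bound, everything is immediate from what has already been set up. Since $V$ contains $f$ and is stable under every Cartier operator $\Lambda_i$, it contains the entire orbit $\Omega_1(f)$. Combined with Equation \eqref{eq: Eil}, this yields $\stackrel{\longleftarrow}{\comp_q}(\mathbf{a})=|\Omega_1(f)|\leq |V|=q^m$.

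For the forward reading bound, I would use linear duality. Let $\epsilon\colon V\to \mathbb F_q$ be the functional sending a power series in $V$ to its constant term. Observe that for $n = \sum_{k=0}^t \ell_k q^k$ one has
\[
a_n=\epsilon\!\left(\Lambda_{\ell_t}\Lambda_{\ell_{t-1}}\cdots\Lambda_{\ell_0}(f)\right)
=\bigl(\Lambda_{\ell_0}^{*}\Lambda_{\ell_1}^{*}\cdots\Lambda_{\ell_t}^{*}\,\epsilon\bigr)(f),
\]
where the starred operators are the transposes acting on $V^{*}$. This identity suggests the following automaton with state space $V^{*}$: take initial state $\epsilon$, let the digit $\ell$ act by the transition $\phi\mapsto \Lambda_\ell^{*}\phi$, and define the output at a state $\phi$ to be $\phi(f)\in\mathbb F_q$. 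Reading the base-$q$ digits of $n$ from most significant to least significant produces the state $\Lambda_{\ell_0}^{*}\cdots\Lambda_{\ell_t}^{*}\epsilon$, whose output is exactly $a_n$ by the identity above.

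The number of reachable states of this automaton is bounded by the orbit of $\epsilon$ under the monoid generated by $\Lambda_0^{*},\ldots,\Lambda_{q-1}^{*}$, hence by $|V^{*}|=q^m$. Minimizing the automaton only decreases the state count, giving $\stackrel{\longrightarrow}{\comp_q}(\mathbf{a})\leq q^m$. The one thing I would check carefully is that this construction is insensitive to leading zeros, so that the automaton truly computes $\mathbf a$ in direct reading: prepending a digit $0$ to the input replaces the final state $\Lambda_{\ell_0}^{*}\cdots\Lambda_{\ell_t}^{*}\epsilon$ by $\Lambda_{\ell_0}^{*}\cdots\Lambda_{\ell_t}^{*}\Lambda_0^{*}\epsilon$, and evaluation at $f$ still yields $\epsilon(\Lambda_0\Lambda_{\ell_t}\cdots\Lambda_{\ell_0}f)=a_n$ since $\Lambda_0$ preserves the coefficient of index $0$. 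Thus the automaton genuinely represents $\mathbf a$ in direct reading, and the desired bound follows.

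The only mildly delicate point is the bookkeeping around the duality and the handling of leading zeros; everything else is routine linear algebra together with the defining property $a_n=\epsilon(\Lambda_{\ell_t}\cdots\Lambda_{\ell_0}f)$ of the Cartier operators.
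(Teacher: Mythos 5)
Your proof is correct and follows essentially the same route the paper alludes to: the reverse-reading bound is the orbit count inside $V$, and the forward-reading bound comes from dualizing, which is precisely the duality underlying Bridy's notion of a $q$-presentation (Proposition~2.4 in \cite{Bridy}) that the paper cites in lieu of giving the details. Your explicit check that prepending a digit $0$ does not change the output, using that $\Lambda_0$ fixes constant terms, is exactly the right verification that the dual automaton computes $\mathbf a$ in direct reading.
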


Proposition \ref{prop: cartier} is 	a rephrasing of Proposition 2.4 in \cite{Bridy}, where Bridy uses the notion of a 
$q$-\emph{presentation} for the sequence ${\bf a}$. This concept is analogous to the notion of a {\em recognizable} rational series, which follows from the fact that a deterministic finite automaton with 
output in a finite field can be seen as a weighted automaton, so that  its rational series is recognizable
 \cite{Berstel-Reutenauer}. 

Bridy shows that he can inject $\Omega_1(f)$ into a vector space of dimension $(h+1)d$, so that he bounds  
$\stackrel{\longrightarrow}{\comp_q}({\bf a})$ by $q^{(h+1)d}$.  Here too our bounds are only slightly weaker, particularly 
in the smooth case.

 \begin{corollary}\label{reverse-direct-smooth}
Let $f(x)=\sum_{n\geq 0} a_n x^n \in \F_q[[x]]$ be an algebraic power series of degree $d$ and height $h$, and 
let $P(x,y)\in \F_q[x,y]$ denote the minimal polynomial of $f$. 
\begin{itemize}
\item[{\rm (i)}]
If $\frac{\partial P}{\partial y}(0,0)\neq 0$ and $f(0)=0$, 
then $\stackrel{\longrightarrow}{\comp_q}({\bf a})\leq q^{1+(h+1)d}$.
\item[{\rm (ii)}]
Otherwise, let $r$  be  the order at $0$ of the resultant of $P(x,y)$ 
and $ \frac{\partial P}{\partial y}(x,y)$. Then
 $\stackrel{\longrightarrow}{\comp_q}({\bf a})\leq q^{1+(h+1)d +r}$. 
 In particular, $\stackrel{\longrightarrow}{\comp_q}({\bf a})\leq q^{(3h+1)d -h+1}$.
\end{itemize}
 \end{corollary}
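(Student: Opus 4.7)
The plan is to apply Proposition~\ref{prop: cartier}: any $\Omega_{1}$-invariant $\F_{q}$-vector space $V\ni f$ of dimension $m$ yields $\stackrel{\longrightarrow}{\comp_{q}}({\bf a})\le q^{m}$, so in each case I only need to exhibit such a $V$ of the right dimension.

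For part (i), the construction already appears in the proof of Theorem~\ref{smooth-case}. There $\Delta(W)$ is shown to be $\Omega_{1}$-invariant of dimension at most $(h+1)d$ and to contain $\Omega_{1}(f)\setminus\{f\}$. Setting $V:=\F_{q}\cdot f+\Delta(W)$ gives an $\Omega_{1}$-invariant space of dimension at most $1+(h+1)d$ containing $f$, which feeds directly into Proposition~\ref{prop: cartier}.

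For part (ii) I would assemble $V$ out of three ingredients suggested by the proof of Theorem~\ref{bounding-f-kernel}: the polynomial space $\F_{q}[x]_{\deg\le r-1}$, which is $\Omega_{1}$-invariant of dimension $r$ because Cartier operators cannot increase polynomial degree; the diagonal space $\Delta(V_{r,0})$, which is $\Omega_{1}$-invariant of dimension at most $(h+1)d$ by Lemma~\ref{shrinkage}(iii); and the line $\F_{q}\cdot f$, which is what forces $f\in V$. These assemble into
\[
V\;:=\;\F_{q}\cdot f\;+\;\F_{q}[x]_{\deg\le r-1}\;+\;\Delta(V_{r,0}),
\]
of total dimension at most $r+(h+1)d+1=1+(h+1)d+r$, which matches the target. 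The ``in particular'' bound then follows by plugging the resultant estimate $r\le h(2d-1)$ from~\eqref{determinant} into $1+(h+1)d+r$, giving $(3h+1)d-h+1$.

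The nontrivial step, and the one I expect to be the main obstacle, is verifying the $\Omega_{1}$-invariance of this $V$, which reduces to checking that $\Lambda_{\ell}(f)\in V$ for each $\ell\in\{0,\ldots,q-1\}$. Using the splitting $f=V_{r}(x)+x^{r}f^{(r)}(x)$, the polynomial part $\Lambda_{\ell}(V_{r})$ lands in $\F_{q}[x]_{\deg\le\lfloor r/q\rfloor}\subset\F_{q}[x]_{\deg\le r-1}$, so the problem reduces to understanding $\Lambda_{\ell}(x^{r}f^{(r)})$. By Lemma~\ref{intertwining} iterated this equals $x^{i}\Lambda_{\tilde\ell}(f^{(r)})$ for some $i\in\{\lfloor r/q\rfloor,\lceil r/q\rceil\}$, and by Lemma~\ref{bound-large} the factor $\Lambda_{\tilde\ell}(f^{(r)})$ lies in $\Delta(V_{r,r/q+1-1/q})$. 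One then needs to show that the $x^{i}$-shift, together with the few extra low-$i'$ basis elements of $V_{r,r/q+1-1/q}\setminus V_{r,0}$, can be reabsorbed modulo $\F_{q}[x]_{\deg\le r-1}+\F_{q}\cdot f$. The tool for this is the identity $\Delta(c(xy)\cdot y)=0$ combined with $y\cdot (y^{-1}Q_{r}(xy,y))=Q_{r}(xy,y)$, which lets one reduce high-$(xy)$-degree numerators modulo $Q_{r}$ back into the $V_{r,0}$ range, exactly mirroring the bookkeeping of cases (a)--(c) in the proof of Theorem~\ref{bounding-f-kernel}. Carrying out this reduction cleanly is the heart of the argument.
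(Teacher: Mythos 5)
Part (i) of your proposal reproduces the paper's argument exactly and is fine. The issue is in part (ii): you propose the space
$$V \;=\; \F_q\cdot f \;+\; \F_q[x]_{\deg\le r-1} \;+\; \Delta(V_{r,0}),$$
which does have the target dimension $1 + r + (h+1)d$, but — as you yourself flag — you do not actually establish that $V$ is $\Omega_1$-invariant, and this is a genuine obstruction rather than just bookkeeping. Lemma~\ref{bound-large} only places $\Lambda_\ell(f^{(r)})$ in $\Delta(V_{r,\,r/q+1-1/q})$, which strictly contains $\Delta(V_{r,0})$, and the subsequent multiplication by $x^i$ with $i\in\{\lfloor r/q\rfloor,\lceil r/q\rceil\}$ shifts the $(xy)$-exponent range above $r+h$, outside the $V_{r,0}$ window. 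Your suggested remedy via the identity $\Delta(c(xy)\cdot y)=0$ and reduction modulo $Q_r$ is a plausible heuristic but is left as a sketch, so the invariance of $V$ — and hence the whole corollary — is not proved.

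The paper sidesteps this difficulty by not trying to package the exceptional part of the orbit into a polynomial space at all. Instead it takes $V' := \F_q + \Span_{\F_q}\mathcal E(f) + \Delta(V_{r,0})$, where $\mathcal E(f)$ from \eqref{initial} is the (finite) set of images of $f$ under compositions of fewer than $t_0=\lfloor\log_q r\rfloor$ Cartier operators. Invariance of $V'$ is then immediate from the case analysis (a)--(c) in the proof of Theorem~\ref{bounding-f-kernel}: one more $\Lambda_\ell$ either keeps a word short, leaving it in $\mathcal E(f)$, or produces a word of length at least $t_0$, which lands in $\F_q + \Delta(V_{r,0})$. The dimension bound $1+r+(h+1)d$ then comes from $|\mathcal E(f)|\le r$, and the ``in particular'' clause follows from \eqref{determinant} exactly as you computed. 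In short: your $V$ has the right size, but replacing $\Span_{\F_q}\mathcal E(f)$ with the concrete space $\F_q[x]_{\deg\le r-1}$ creates precisely the invariance problem that the paper's choice is built to avoid, and you have not closed that gap.
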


\begin{proof}

We first prove (i), which corresponds to the smooth case.    
The proof of Theorem \ref{smooth-case} tells us that we can realise most of $\Omega_1(f)$ in the $\F_q$-vector space  $\Delta(W)$ where $W$ is defined in (\ref{nested-spaces}), of dimension $(h+1)d$. The only element of $\Omega_1(f)\backslash W$ is $f$ itself. 
Thus we add $f$ to $\Delta(W)$, to obtain a vector space of dimension $1+(h+1)d$ into which we have injected $\Omega_1(f)$.
Applying Proposition \ref{prop: cartier} gives the result. 

In the general case, an inspection of the last part of the proof of Theorem \ref{bounding-f-kernel} tells us that the vector space $\Delta(V_{r,0} )$, defined in Lemma \ref{shrinkage} contains most of $\Omega_1(f)$. Here, we have to add another basis element, representing the addition of constants in the expression $\F_q+\Delta(V_{r,0} )$. We must also add (at most) $r$ basis elements, to represent the set $\mathcal E(f)$ defined in (\ref{initial}). The result follows form Proposition \ref{prop: cartier} and the bound in  \eqref{determinant}.
\end{proof}

\section{Linking the genus to our setting}\label{genus}
     
  Let  $P(x,y)\in \F_q[x,y] $,   let $\mathcal P$ be the Newton polygon of $P(x,y)$, and let $g_P$ be the number of integral points in the interior of $\mathcal P$.  We recall that $g_P$ is closely related to the genus $g$ of the curve associated with $P$. In a majority of cases we have $g_P=g$, and in general
  $g\leq g_P$. For example, 
    if $P(x,y)$ is  irreducible over every algebraic extension of $\F_q$, then
 this follows by work of Beelen \cite{Beelen-2009}.
 
 If 
    $f(x)  \in \F_q[[x]]$ is a  root of $P(x,y) \in \F_q[x,y]$ of degree $d$ and height $h$, then $g_P\leq (h-1)(d-1)$. Bridy's bound 
    \eqref{eq: bridy}  is better than his general bound \eqref{eq: bridy2} when the genus is smaller than $(h-1)(d-1)$. In our case too, if  $g_P<(h-1)(d-1)$, then it is possible to obtain better bounds: we can choose smaller vector spaces to work with, taking into account the shape of $\mathcal P$, instead of the full rectangular bases that we have in   \eqref{nested-spaces}   or \eqref{rectangular}.

 \section*{Acknowledgement}
The second author is grateful to Peter Beelen for a discussion of his work. She also thanks  IRIF, Universit\'e Paris Diderot-Paris 7, for its hospitality and support.
                 
{\footnotesize
\bibliographystyle{alpha}
\bibliography{bibliography}
}

 \end{document}